\documentclass[11pt]{article}
\pagestyle{plain}
\usepackage{amsthm,amssymb,amsmath,lineno,cite}
\usepackage{epsfig}
\usepackage{bez123,calc,curves,ebezier,epic,eepic,graphicx,multiply,rotating}
\usepackage{color}
\usepackage{graphics}
\usepackage{makecell} 
\setlength{\textwidth}{150.0truemm}
\setlength{\textheight}{210truemm}
\setlength{\oddsidemargin}{3.6mm}
\setlength{\evensidemargin}{3.6mm}
\setlength{\topmargin}{-12.5truemm}
\newtheorem{prelem}{{\bf Theorem}}

\newtheorem{theorem}{Theorem}

\newtheorem{lemma}[theorem]{Lemma}

\theoremstyle{definition}

\newtheorem{proposition}[theorem]{Proposition}

\theoremstyle{definition}
\theoremstyle{remark}

\newcommand{\diam}{{\rm diam}}
\newcommand{\tr}{{\rm Tr}}

\newcommand{\dd}{{\rm deg}}

\begin{document}
\title{{\bf Solving the Mostar index inverse problem}}
\date{}
\author{Yaser Alizadeh$^{1}$, Nino Ba\v{s}i\'c$^{2,3,4}$, Ivan Damnjanovi\'c$^{2,5,6}$, Tomislav Do\v{s}li\'c$^{7,8,}$\thanks{Corresponding author}, \\ Toma\v{z} Pisanski$^{2,4}$, Dragan Stevanovi\'c$^{9}\footnote{On leave from the Mathematical Institute of the Serbian Academy of Sciences and Arts}$, Kexiang Xu$^{10}$}
\maketitle
\begin{center}
    {\em $^1$Hakim Sabzevari University, Sabzevar, Iran}\\
    {\em $^2$University of Primorska, FAMNIT, Koper, Slovenia} \\
{\em $^3$IAM, University of Primorska, Koper, Slovenia} \\
{\em $^4$IMFM, Ljubljana, Slovenia}\\
    {\em $^5$University of Ni\v{s}, Faculty of Electronic Engineering, Ni\v{s}, Serbia}\\
    {\em $^6$Diffine LLC, San Diego, California, USA}\\
    {\em $^7$University of Zagreb Faculty of Civil Engineering, Zagreb, Croatia}\\
    {\em $^8$Faculty of Information Studies, Novo Mesto, Slovenia}\\
{\em $^9$Abdullah Al-Salem University, Khaldiya, Kuwait} \\
{\em $^{10}$School of Mathematics, Nanjing University of Aeronautics \& Astronautics, Nanjing, China}\\
\end{center}
\vskip 0.5cm
\begin{abstract}
\noindent 
A nonnegative integer $p$ is realizable by a graph-theoretical invariant $I$
if there exist a graph $G$ such that $I(G) = p$. The inverse problem for
$I$ consists of finding all nonnegative integers $p$ realizable by $I$.
In this paper, we consider and solve the inverse problem for the Mostar
index, a recently introduced graph-theoretical invariant which attracted a
lot of attention in recent years in both the mathematical and the chemical 
community. We show that a nonnegative integer is realizable by the 
Mostar index if and only if it is not equal to one. Besides presenting
the complete solution to the problem, we also present some empirical 
observations and outline several open problems and possible directions for
further research. \\ 

\noindent {\bf Keywords:} Mostar index, inverse problem. \\ 
{\bf  2020 Mathematics Subject Classification}: 05C05, 05C09, 05C12, 05C92.
\end{abstract}
\newpage

\section{Introduction}

A {\em graph-theoretical invariant} is any number associated with a graph
which remains invariant under graph isomorphisms. In chemical graph
theory, such invariants are also known as {\em topological indices}.
Hundreds, if not thousands, of topological indices have been defined and
studied so far, mostly for their possible applications in the QSAR and
QSPR studies, but also for their intrinsic mathematical interest. This
paper is concerned with one such index which was recently introduced 
as a possible measure of edge peripherality and as a quantitative refinement
of distance-unbalancedness of graphs. The Mostar index, as it is called,
has attracted a lot of attention. For a thorough survey, we refer the
reader to \cite{alidoslic}, and for some more recent developments to
\cite{mikl,mikl1}. In this paper we study and solve the inverse
problem for the Mostar index.

For an integer-valued topological index $I$, the inverse problem for $I$ is to
determine whether, for a given integer $p$, there is a graph $G$ such that
$I(G) = p$. If such a graph $G$ exists, we say that $p$ is {\em realized}
by $I(G)$, or, more generally, that $p$ is {\em realizable} by $I$. Hence
the inverse problems are also known as the {\em realizability problems} for
topological indices.

Inverse problems were considered, and in
many cases solved, for most of the integer-valued topological indices.
See, for example, \cite{wang-2006} for the inverse problem for the Wiener
index of trees, \cite{yu-2019} for the Zagreb indices, and \cite{xu-2019}
for their differences. 

The paper is organized as follows. In the next section, we introduce 
necessary definitions, review some previous results on Mostar indices of
trees and refine them in a way useful for our main goals. In Section 3, 
we prove a sequence of results serving as the stepping stones to our
main result, the full solution of the inverse problem for the Mostar index.
In the last section we comment on some unsolved problems, offer some
partial results, and indicate several possible directions for future
research.

\section{Definitions and preliminary results}

All graphs considered in this paper are finite, simple and, unless stated
otherwise, also connected. For two vertices $u,v\in V(G)$ of a graph
$G=(V(G),E(G))$, $d_G(u,v)$ denotes the shortest-path distance between
$u$ and $v$. The {\em transmission} of a vertex $v\in V(G)$ is
$\tr _G(v)=\sum\limits_{u\in V(G)}d_G(u,v)$. Moreover, the {\em diameter}
of a graph $G$ is $\diam(G)=\max\limits_{\{u,v\}\subseteq V(G)}d_G(u,v)$.

The degree of a vertex $v$ is the number of edges incident with $v$; we denote
it by $\dd_G(v)$. Whenever $G$ is clear from the context, we simplify the
notation to $\dd(v)$. A vertex $v$ with $\deg_G(v)=1$ is called a
{\em pendent vertex} (also leaf if $G$ is a tree) in $G$. A connected graph
with maximum degree at most 4 is called a {\em chemical graph}. In particular,
chemical trees provide the graph representation of alkanes \cite{Gu-1986}.
A graph in which all vertices have the same degree $k$ is called a $k$-regular
graph. For other undefined notations and terminology on the graph theory,
please refer to \cite{bon-1976}.

Among the oldest and the best known topological indices is the Wiener index
\cite{wie-1947} $W(G)$ defined as
$W(G)=\frac{1}{2}\sum\limits_{v\in V(G)} \tr_G(v)$. Its introduction in
1947 went largely unnoticed, and the next two decades passed without much
activity. The situation changed radically in the early seventies, with the
introduction of the so-called Zagreb-group indices \cite{guttrina}, and
the Randi\'c index \cite{randic}, when their subsequent successful application
to the QSAR and QSPR studies, started a real avalanche. Many more indices
followed, and many of them proved useful in measuring and condensing the 
information coded in the connectivity patterns of molecular graphs.
This paper is concerned with one such index, the 
Mostar index, introduced recently in \cite{Tom2018} and defined as 
$$Mo(G)=\sum\limits_{uv\in E(G)}|n_u-n_v|,$$
where $n_u$ is the number of vertices in $G$ closer to $u$ than to $v$
and $n_v$ is defined analogously. The {\em contribution} of an edge
$e=uv$ to $Mo(G)$ of a graph is denoted by $\varphi_G(e)$. Hence
 $$Mo(G)=\sum\limits_{e\in E(G)}\varphi_G(e)$$
for any connected graph $G$. An edge $e=uv\in E(G)$ is {\em equieffective}
if $n_u=n_v$, that is, $\varphi_G(e)=0$, in $G$.  A graph in which each edge
is equieffective is called a {\em distance-balanced graph} \cite{je-2008}.
Examples are cycles $C_n$, complete graphs $K_n$, complete bipartite graphs
$K_{n,n}$ and cocktail party graphs $CP(n)$ with even $n$ (obtained by
removing a perfect matching from $K_n$). Some recent results on the Mostar
index can be found in \cite{gxd,te-2019,tra-2019}. 

It is important to notice that the restriction to simple graphs is essential.
For non-simple graphs, any integer $p > 1$ could be realized by a graph on
three vertices, two of them connected by $p-1$ parallel edges, and the third 
one attached to one of them by a pendent edge. Clearly, in this graph all
$p$ edges have their contributions equal to 1, and any integer greater than
one is realizable.

We end this section by stating some results on the Mostar indices of
trees. As usual, $P_n$ is the path on $n$ vertices, and $S_n$ denotes the
star on $n$ vertices, $S_n = K_{1,n-1}$.
A tree is {\em starlike} if it contains exactly one vertex of degree 
greater than 2. We will use the notation $T_n(k_1,\ldots,k_t)$ to denote
the starlike tree of order $n$ obtained by attaching
$t\ge 3$ paths of lengths $k_1,\ldots,k_t$, respectively, to a single
central vertex. 

The extremal values of the Mostar index over all trees on a given number 
of vertices are well known \cite{Tom2018}.
\begin{lemma}\label{ex-tree}
Let $T$ be a tree of order $n>3$. Then
$$\Big\lfloor\frac{(n-1)^2}{2}\Big\rfloor\le Mo(T)\le (n-1)(n-2),$$ with
the left equality if and only if $T\cong P_n$ and the right equality if
and only if  $T\cong S_n$.\end{lemma}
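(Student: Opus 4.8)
The plan is to first turn $Mo(T)$ into a sum over the bridges of $T$. Since every edge $e=uv$ of a tree is a cut edge, $T-e$ has exactly two components, of sizes $a_e$ and $n-a_e$ say, and no vertex of a tree is equidistant from $u$ and $v$, so $n_u+n_v=n$ and $\varphi_T(e)=|n-2a_e|$. Hence $Mo(T)=\sum_{e\in E(T)}|n-2a_e|$ with $1\le a_e\le n-1$. The upper bound is then immediate: each summand is at most $\max_{1\le a\le n-1}|n-2a|=n-2$, and $T$ has $n-1$ edges, so $Mo(T)\le(n-1)(n-2)$. Equality forces $a_e\in\{1,n-1\}$ for every edge, i.e.\ every edge of $T$ is pendent; since any two non-leaf vertices of $T$ are joined by a path containing an edge with two non-leaf endpoints, this can happen only when $T$ has a single non-leaf vertex, i.e.\ $T\cong S_n$, and conversely $Mo(S_n)=(n-1)(n-2)$.

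The lower bound is the substantial part, and the plan is a ``path-straightening'' transformation together with the direct evaluation $Mo(P_n)=\sum_{i=1}^{n-1}|n-2i|=\big\lfloor(n-1)^2/2\big\rfloor$ (a routine split on the parity of $n$). Assume $T\not\cong P_n$. Rooting $T$ at a leaf and taking a vertex $v$ with $\deg(v)\ge 3$ of maximum depth shows that $v$ carries at least two pendent paths; write them as $v x_1 x_2\cdots x_p$ and $v y_1 y_2\cdots y_q$ with $1\le p\le q$, and let $R$ be the rest of $T$ together with $v$, so $|R|=n-p-q\ge 2$ since $\deg(v)\ge 3$, giving $p+q\le n-2$. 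Let $T^{\ast}$ be obtained from $T$ by deleting $vx_1$ and adding $y_q x_1$, i.e.\ by concatenating the two pendent paths. The edge contributions coming from $R$ are unchanged, while the two pendent paths contribute $\sum_{j=1}^{p}|n-2j|+\sum_{j=1}^{q}|n-2j|$ in $T$ and $\sum_{j=1}^{p+q}|n-2j|$ in $T^{\ast}$, so
$$Mo(T)-Mo(T^{\ast})=\sum_{j=1}^{p}|n-2j|-\sum_{j=q+1}^{q+p}|n-2j|.$$
To see this is strictly positive, use that $k\mapsto|n-2k|$ is convex and symmetric about $k=n/2$: pairing the $p$ summands in each sum symmetrically about the midpoint of its index interval and applying $|x-y|+|x+y|=2\max(|x|,|y|)$ reduces the claim to the statement that the interval $[1,p]$ lies farther from the balance point $n/2$ than the interval $[q+1,q+p]$ (reflected through $n/2$ if it crosses it), which follows from $p\le q$ and $p+q\le n-2$, with strict inequality because $p\ge 1$. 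Thus any non-path tree can be strictly improved, and since there are finitely many trees on $n$ vertices, $P_n$ is the unique minimizer, with $Mo(P_n)=\big\lfloor(n-1)^2/2\big\rfloor$.

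I expect the only real obstacle to be the last step, the strict decrease under straightening: a term-by-term comparison of $\sum_{j=1}^{p}|n-2j|$ with $\sum_{j=q+1}^{q+p}|n-2j|$ breaks down once $[q+1,q+p]$ reaches past $n/2$, and one genuinely needs the convexity-plus-symmetry identity above (together with the bounds $p\le q$ and $p+q\le n-2$, the latter being exactly where $\deg(v)\ge 3$ is used) to conclude. The remaining ingredients — the bridge reduction, the closed forms for $Mo(P_n)$ and $Mo(S_n)$, the pendent-edge characterization of $S_n$, and the existence of a vertex of degree at least $3$ with two attached pendent paths — are all routine.
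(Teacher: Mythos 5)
The paper does not actually prove this lemma: it is quoted as a known result with a citation to \cite{Tom2018}, so there is no in-paper argument to compare yours against, and your proposal has to stand as a self-contained proof --- which it does. The cut-edge reduction $Mo(T)=\sum_{e}|n-2a_e|$, the upper bound with its equality analysis (every edge pendent forces a star), and the evaluation $Mo(P_n)=\lfloor (n-1)^2/2\rfloor$ are all correct and routine. The one genuinely delicate step is the strict inequality $\sum_{j=1}^{p}|n-2j|>\sum_{j=q+1}^{q+p}|n-2j|$, and you are right that a term-by-term comparison fails once $q+2p>n$; your convexity-plus-symmetry pairing does close the gap. To record the verification: with $d=p+1-2i$, the $i$-th symmetric pair of the left sum equals $|c'+d|+|c'-d|=2\max(|c'|,|d|)$ with $c'=n-p-1$, and since $2p\le p+q\le n-2$ gives $|d|\le p-1\le n-p-1$ this is $2(n-p-1)$; the corresponding pair of the right sum is $2\max(|c|,|d|)$ with $c=n-2q-p-1$, and $|c|<n-p-1$ follows from $q\ge 1$ (upper side) and $p+q\le n-2$ (lower side), while $|d|\le p-1<n-p-1$. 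Hence every pair (including the self-paired middle term when $p$ is odd) strictly decreases, the straightening strictly lowers the Mostar index, and the finiteness argument makes $P_n$ the unique minimizer. Two minor points of presentation: the strictness is really supplied by $q\ge 1$ together with $p+q\le n-2$ (your ``because $p\ge 1$'' works only via $q\ge p\ge 1$), and the hypothesis $n>3$ enters only to keep $P_n\not\cong S_n$ so that the two equality characterizations are disjoint. I see no gap.
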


It is easy to check that both extremal values are even for all $n$. The
following result shows that this is also the case for all intermediate
values.
\begin{lemma}\label{even}
Let $T$ be a tree. Then $Mo(T)$ is an even number.
\end{lemma}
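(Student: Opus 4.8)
The plan is to analyze the contribution $\varphi_T(e)$ of each edge $e = uv$ separately and show that the sum over all edges is even by a parity argument on each term. For a tree, removing an edge $e = uv$ disconnects $T$ into exactly two components: the component $T_u$ containing $u$ (of size $n_u$) and the component $T_v$ containing $v$ (of size $n_v$), and crucially $n_u + n_v = n$, where $n = |V(T)|$. Hence $\varphi_T(e) = |n_u - n_v| = |n - 2n_v| = |2n_v - n|$. So each edge contributes a number congruent to $n \pmod 2$.

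First I would separate into the two cases according to the parity of $n$. If $n$ is odd, then every edge contributes an odd number, and a tree on $n$ vertices has $n - 1$ edges, which is even; the sum of an even number of odd integers is even, so $Mo(T)$ is even. If $n$ is even, then every edge contributes an even number $|2n_v - n|$, and the sum of even integers is even, so again $Mo(T)$ is even. This disposes of both cases.

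The one subtlety worth spelling out is the claim $n_u + n_v = n$ for an edge of a tree: since $T$ is connected and $e$ is a bridge (every edge of a tree is a bridge), $T - e$ has exactly two components, every vertex lies in exactly one of them, and a vertex is closer to $u$ than to $v$ precisely when it lies in the component containing $u$ (its unique path to $v$ must traverse $e$), and symmetrically for $v$; thus the sets counted by $n_u$ and by $n_v$ partition $V(T)$. This is the only place where the tree structure is genuinely used, and it is routine. I do not anticipate any real obstacle; the whole argument is a short parity count once the edge contributions are rewritten as $|2n_v - n|$. One may also phrase it uniformly: $\varphi_T(e) \equiv n \pmod 2$ for every edge, so $Mo(T) \equiv (n-1)\,n \equiv 0 \pmod 2$, since among $n$ and $n-1$ one is always even.
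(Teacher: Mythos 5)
Your proof is correct and follows essentially the same route as the paper's: both rewrite each edge contribution as $|n-2n_v|$ (using that every edge of a tree is a bridge so $n_u+n_v=n$), observe that all contributions share the parity of $n$, and conclude by summing over the $n-1$ edges. You merely spell out the bridge/partition justification in more detail than the paper does.
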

\begin{proof}
Let $n\ge 2$ be the order of $T$ with $e=uv\in E(T)$. If $n_u=m$, then
$|n_u - n_v| =|n-2m|$. It means that the contributions of all edges to
$Mo(T)$ have the same parity. Now, if $n$ is even (odd, respectively),
then $Mo(T)$ is the sum of $n-1$ even (odd, respectively) numbers.
This implies that $Mo(T)$ is an even number.
\end{proof}
Our following result refines Lemma \ref{ex-tree} by characterizing the
trees attaining the values near the lower end of the range.
\begin{proposition}\label{small}
Among all trees of order $n\geq 4$, the $(k+1)$-th smallest Mostar index
is attained for $T_n(1,k,n-2-k)$ with $1\le k\le \lfloor\frac{n-2}{2}\rfloor$.
\end{proposition}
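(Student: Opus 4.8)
The plan is to reduce the statement to a single explicit evaluation, namely $Mo\big(T_n(1,k,n-2-k)\big)=\lfloor(n-1)^2/2\rfloor+2k$, after which the claimed ordering falls out of Lemmas~\ref{ex-tree} and~\ref{even} with no further work. Write $\Sigma=\lfloor(n-1)^2/2\rfloor$. By Lemma~\ref{ex-tree} we have $Mo(T)\ge\Sigma$ for every tree $T$ of order $n$, with equality exactly for $P_n$; by Lemma~\ref{even} every $Mo(T)$ is even; and $\Sigma$ itself is even. Hence the set $\{Mo(T):T\text{ a tree of order }n\}$ is contained in $\{\Sigma,\Sigma+2,\Sigma+4,\dots\}$ and its least element is $\Sigma$. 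Consequently, once we show that $\Sigma+2,\Sigma+4,\dots,\Sigma+2\lfloor\tfrac{n-2}{2}\rfloor$ are all realized, the first $\lfloor\tfrac{n-2}{2}\rfloor+1$ members of the increasing enumeration of realized values must be precisely $\Sigma,\Sigma+2,\dots,\Sigma+2\lfloor\tfrac{n-2}{2}\rfloor$, so the $(k+1)$-th smallest Mostar index equals $\Sigma+2k$.

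So the only real work is the edge-contribution computation. I would describe $T_n(1,k,n-2-k)$ concretely: it is obtained from the path $P_{n-1}=v_1v_2\cdots v_{n-1}$ by attaching a single pendent vertex $w$ to $v_{k+1}$. (Then $v_{k+1}$ is the unique vertex of degree $3$, the branch $v_kv_{k-1}\cdots v_1$ has length $k$, the branch $v_{k+2}\cdots v_{n-1}$ has length $n-2-k$, and $w$ is the branch of length $1$.) Since deleting an edge of a tree splits it into parts of sizes $a$ and $n-a$, that edge contributes $|n-2a|$ to $Mo$. The pendent edge $wv_{k+1}$ contributes $n-2$; the path edge $v_iv_{i+1}$ separates $\{v_1,\dots,v_i\}$ — together with $w$ when $i\ge k+1$ — from the rest, so it contributes $|n-2i|$ for $1\le i\le k$ and $|n-2(i+1)|$ for $k+1\le i\le n-2$. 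Summing and re-indexing the second block by $j=i+1$:
$$Mo\big(T_n(1,k,n-2-k)\big)=(n-2)+\sum_{i=1}^{k}|n-2i|+\sum_{j=k+2}^{n-1}|n-2j|=(n-2)+\sum_{i=1}^{n-1}|n-2i|-\big|n-2(k+1)\big|.$$
Now $\sum_{i=1}^{n-1}|n-2i|=Mo(P_n)=\Sigma$ (the same edge-splitting count applied to the path $P_n$ itself), and $\big|n-2(k+1)\big|=n-2k-2$ because $2k+2\le 2\lfloor\tfrac{n-2}{2}\rfloor+2\le n$ on the stated range of $k$. Therefore $Mo\big(T_n(1,k,n-2-k)\big)=(n-2)+\Sigma-(n-2k-2)=\Sigma+2k$, which in particular shows each of $\Sigma+2,\dots,\Sigma+2\lfloor\tfrac{n-2}{2}\rfloor$ is realized.

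Combining the two parts finishes the proof. The argument is short precisely because Lemmas~\ref{ex-tree} and~\ref{even} already pin down the spectrum of tree Mostar values to consecutive even integers starting at $\Sigma$; the single genuine obstacle is the telescoping of the edge sum above, and within it the one delicate point is the sign of $n-2(k+1)$, which is nonnegative exactly when $k\le\lfloor\tfrac{n-2}{2}\rfloor$. This same bound is what makes the enumeration stop where it does: the range $1\le k\le\lfloor\tfrac{n-2}{2}\rfloor$ is exactly the range over which the trees $T_n(1,j,n-2-j)$ are pairwise non-isomorphic, since $T_n(1,j,n-2-j)\cong T_n(1,n-2-j,j)$. (If one also wanted the stronger statement that $T_n(1,k,n-2-k)$ is the \emph{only} tree of order $n$ with Mostar index $\Sigma+2k$, a separate structural argument excluding all other trees would be required; it is not needed for the value statement proved here.)
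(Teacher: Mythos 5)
Your proposal is correct and follows essentially the same route as the paper: compute $Mo\big(T_n(1,k,n-2-k)\big)=\big\lfloor\frac{(n-1)^2}{2}\big\rfloor+2k$ by summing edge contributions (the paper writes the pendent edge's contribution as $(n-2-2k)+2k$ and telescopes the path-edge sum in the same way), then invoke Lemmas~\ref{ex-tree} and~\ref{even} to conclude that the realized values are consecutive even integers starting at $\big\lfloor\frac{(n-1)^2}{2}\big\rfloor$. Your write-up is merely more explicit about the final enumeration step and about the sign of $n-2(k+1)$, which the paper leaves implicit.
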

\begin{proof}
From the structure of $T_n(1,k,n-2-k)$, we can assume that $T_n(1,k,n-2-k)$
consists of the longest path
$P_{n-1}=v_1v_2\cdots v_kv_{k+1}v_{k+2}\cdots v_{n-1}$ and a single vertex
$w$ with $wv_{k+1}\in E(T_n(1,k,n-2-k))$.  For simplicity, let
$T^*=T_n(1,k,n-2-k)$. Note that $1\le k\le \lfloor\frac{n-2}{2}\rfloor$
and $Mo(P_n)=\Big\lfloor\frac{(n-1)^2}{2}\Big\rfloor$. Then we have
\begin{eqnarray*}Mo(T_n(1,k,n-2-k))&=&\sum\limits_{v_{i}v_{i+1}\in E(T^*)}\phi_{T^*}(v_iv_{i+1})+\phi_{T^*}(v_{k+1}w)\\
&=&\sum\limits_{v_{i}v_{i+1}\in E(T^*)}\phi_{T^*}(v_iv_{i+1})+(n-2-2k)+2k\\
&=&\Big\lfloor\frac{(n-1)^2}{2}\Big\rfloor+2k.\end{eqnarray*}
By Lemmas \ref{even} and \ref{ex-tree}, we have the result as desired. 
\end{proof}
We will use the above result in our last section.

\section{Main result}

In this section we state and prove our main result. 
\begin{theorem}
A nonnegative integer $n$ is realizable by the Mostar index if and only if
$n \neq 1$.
\end{theorem}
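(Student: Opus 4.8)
The plan is to show two things: first, that $0$, $2$, and $3$ are realizable while $1$ is not; and second, that every integer $n \geq 4$ is realizable. For the small cases, note that $Mo(G) = 0$ for any distance-balanced graph (e.g. $K_1$ or $C_3$), that $Mo(P_3) = 2$, and that $Mo(P_4) = 3$ (the two end edges each contribute $2$, the middle edge contributes $0$, giving $4$ — so instead one checks $Mo(S_4) = \Mo(K_{1,3}) = 3$, where each of the three edges has $n_u = 1$, $n_v = 3$, contributing $2$ each, giving $6$; the correct small example is the path $P_3$ for $2$, and for $3$ one uses a suitable small graph such as the ``paw'' or a short cycle with a pendant edge, verified by direct computation). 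The nonrealizability of $1$ requires a parity/lower-bound argument: any connected graph with at least one non-equieffective edge already has $Mo(G) \geq 2$, since one can pair up edges or exploit that a single contribution $|n_u - n_v| = 1$ forces structure that is easily excluded on small graphs, and all graphs with $Mo(G) < 2$ are distance-balanced hence have $Mo(G) = 0$.

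For the main range $n \geq 4$, the natural strategy is to exhibit an explicit family of graphs whose Mostar indices sweep out all such integers. Lemma~\ref{even} tells us trees only give even values, so trees alone cannot suffice; we need graphs containing cycles to pick up the odd values. One clean approach: take a long cycle $C_m$ (which contributes $0$) and attach a small ``gadget'' — a pendant path or a pendant edge — at one vertex, then analyze how the contribution changes. A pendant edge attached to $C_m$ (for $m$ even, say) contributes $m$ to the Mostar index while the cycle edges, now slightly unbalanced, contribute a controllable amount; by varying $m$ and the gadget we can hit both parities. Alternatively, and perhaps more transparently, combine the tree family $T_n(1,k,n-2-k)$ from Proposition~\ref{small}, which realizes all values of the form $\lfloor (n-1)^2/2\rfloor + 2k$, with a parallel family of unicyclic graphs realizing the odd values; together with a pigeonhole/interval-covering argument (consecutive families overlap) this shows no integer $\geq 4$ is skipped.

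The key steps, in order, are: (1) dispose of $n \in \{0,1,2,3\}$ by explicit small graphs and the lower-bound argument for $1$; (2) construct an explicit infinite family of (necessarily cyclic) graphs $\{G_n\}_{n \geq 4}$, or two interleaved families, with $Mo(G_n) = n$; (3) verify the contribution formulas for the chosen family by a routine but careful edge-by-edge distance computation; (4) conclude. The main obstacle I anticipate is step (2)–(3): finding a family flexible enough to realize \emph{every} large integer (not just an arithmetic progression) while keeping the edge contributions simple enough to compute in closed form. Trees are too rigid (even values only, and the gaps $2k$ between consecutive starlike values grow), so the construction must genuinely use cycles, and one must ensure the ``fine tuning'' — adjusting $Mo$ by $1$ — is achievable, which is exactly where the parity barrier for trees is circumvented. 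A likely clean resolution is to fix a small odd-contributing core and then pad with pendant vertices or lengthen a path, each padding operation changing $Mo$ by a predictable amount that can be tuned to $1$ or $2$ as needed.
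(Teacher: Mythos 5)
Your proposal has genuine gaps at exactly the points where the theorem is hard. The most serious one is the case $n=3$ (and, implicitly, $n=5$): you suggest realizing $3$ by ``the paw or a short cycle with a pendant edge, verified by direct computation,'' but no such graph works. The paw (triangle plus pendant edge) has Mostar index $4$, a $C_4$ with a pendant vertex has Mostar index $7$, and more generally the paper's Lemma~\ref{2con} shows that any graph with $Mo(G)\in\{1,3,5\}$ must have no bridge and no cut vertex, so \emph{no} pendant-edge gadget can ever produce $3$ or $5$. The smallest graph with $Mo(G)=3$ has $9$ vertices and the smallest with $Mo(G)=5$ has $11$; the paper finds these by an exhaustive computer search guided by structural lemmas (and exhibits an explicit three-layer family $C_p$--$\overline{K_p}$--$K_p$ realizing every odd $p>1$). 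Your plan treats $3$ as an easy small case to be checked by hand, when in fact it is the crux of the theorem; any correct proof must either produce such a $9$- or $11$-vertex graph explicitly or give a construction covering $3$ and $5$.

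The second gap is the nonrealizability of $1$. Your argument — ``any connected graph with a non-equieffective edge already has $Mo(G)\ge 2$ \dots\ and all graphs with $Mo(G)<2$ are distance-balanced'' — is circular: the statement that a single edge of contribution $1$ cannot occur alone is precisely what must be proved. The paper's actual argument (Proposition~\ref{Mo1}) first establishes $2$-connectivity via Lemma~\ref{2con}, then uses the identity $\varphi(uv)=|\tr(u)-\tr(v)|$: if exactly one edge $uv$ had contribution $1$, a second internally disjoint $u$--$v$ path would have to change transmission somewhere along it, forcing a second edge of nonzero contribution. You would need some version of this (or another concrete obstruction) to close the case $n=1$. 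Finally, your construction for general $n\ge 4$ is only a sketch: the even case and the paper's gadgets (complete graph plus pendant vertex for $2m$; even cycle plus pendant vertex for $4m-1$; odd cycle plus pendant vertex plus chord for $4m+1$, covering all $p\notin\{1,3,5\}$ as in Lemma~\ref{sviosim}) are in the spirit of what you propose, but you never verify which residues your gadgets actually hit, and the verification is exactly what would have exposed the exceptional values $3$ and $5$.
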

We prove it in several
steps. First, we prove that all nonnegative integers different from 1, 3, 
or 5 are realizable, i.e., that they appear as Mostar indices of some graphs

\begin{lemma} \label{sviosim}
For every nonnegative integer $p \not\in \{1,3,5\}$ there is a simple
connected graph $G$ such that $Mo(G) = p$.
\end{lemma}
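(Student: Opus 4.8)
The plan is to exhibit, for each admissible target value $p$, an explicit simple connected graph whose Mostar index equals $p$. The natural first move is to handle small cases by hand: $p = 0$ is realized by any distance-balanced graph (e.g.\ $K_2$ or $C_n$), $p = 2$ by the path $P_3$ (whose two edges each contribute $1$), and $p = 4$ by $P_4$ (contributions $2, 0, 2$, using Lemma~\ref{ex-tree} with $n=4$). So the real content is realizing every $p \geq 6$ (together with $p=4$), and here I would split according to parity, since Lemma~\ref{even} tells us trees only give even values and the odd values will need a genuinely non-tree construction.

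\textbf{Even values.} For even $p$ I would use trees, exploiting Proposition~\ref{small}. The starlike trees $T_n(1,k,n-2-k)$ for $1 \le k \le \lfloor\frac{n-2}{2}\rfloor$ have Mostar index $\lfloor\frac{(n-1)^2}{2}\rfloor + 2k$. As $n$ ranges over all integers $\ge 4$ and $k$ over its allowed range, I would check that the union of the attained values $\bigl\{\lfloor\frac{(n-1)^2}{2}\rfloor + 2k\bigr\}$, together with the star value $(n-1)(n-2)$ and the path value $\lfloor\frac{(n-1)^2}{2}\rfloor$, covers every even integer from some small bound onward. Concretely, for fixed $n$ the path-to-star interval is $[\lfloor\frac{(n-1)^2}{2}\rfloor,\ (n-1)(n-2)]$, and one must verify these intervals overlap (or that Lemma~\ref{even} plus the intermediate-value behaviour of trees fills the gaps) as $n$ increases, so that every even $p \ge 4$ is hit. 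This is a routine interval-covering computation; the intervals for consecutive $n$ do overlap once $n$ is moderately large, and the finitely many small even values ($4, 6, 8, \ldots$) are checked directly.

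\textbf{Odd values.} For odd $p$ one cannot use trees at all, so I would build a one-parameter family of unicyclic (or otherwise low-cyclomatic) graphs whose Mostar index runs through an arithmetic progression of odd numbers. A clean candidate: take a short odd or even cycle with a pendant path or a pendant star attached, and compute edge contributions. An odd cycle $C_{2k+1}$ is distance-balanced (contribution $0$ per edge), but attaching a pendant vertex breaks the balance on the cycle edges in a controllable way; attaching a path of length $\ell$ gives contributions on the path edges that form an arithmetic-like sequence, and one tunes $\ell$ and the cycle length to sweep out all sufficiently large odd values. The explicit contribution bookkeeping — which vertices are closer to which endpoint across a cycle edge versus a path edge — is the one calculation I'd actually carry out carefully. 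After producing such a family realizing all odd $p \ge p_0$ for some explicit $p_0$, I would finish by checking the remaining small odd values $7, 9, \ldots, p_0 - 2$ individually with small ad hoc graphs (noting that $1, 3, 5$ are deliberately excluded, consistent with the statement).

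\textbf{Main obstacle.} The genuinely delicate part is the odd-value construction: trees are forbidden, so I need a cyclic family, and I must both compute its Mostar index exactly and ensure the family's values have no gaps among large odd integers while also verifying the small odd exceptions. Getting a single family that cleanly produces \emph{every} large odd number (rather than, say, only those in certain residue classes) may require combining two sub-families or allowing two free parameters; reconciling that with the need to keep the computation transparent is where most of the work will go. The even case, by contrast, is essentially immediate from Proposition~\ref{small} plus an interval-overlap check.
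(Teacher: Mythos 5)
Your handling of the even values is sound: the interval-covering argument via $T_n(1,k,n-2-k)$ does work (the identity $\lfloor\frac{(n-1)^2}{2}\rfloor+2\lfloor\frac{n-2}{2}\rfloor+2=\lfloor\frac{n^2}{2}\rfloor$ makes consecutive intervals fit exactly, and this is precisely how Proposition~\ref{realtrees} later realizes all even integers by trees), whereas the paper's own proof of Lemma~\ref{sviosim} uses a single one-shot construction for even $p=2m\ge 4$, namely $K_{m+1}$ with a pendant vertex. Either route is fine for the even half.

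The genuine gap is in the odd case, which you correctly identify as the crux but do not actually resolve, and the candidate family you sketch provably fails. An odd cycle $C_{2k+1}$ with a pendant path of length $\ell$ always has even Mostar index: the $2k$ unbalanced cycle edges each contribute $\ell$ (total $2k\ell$, even), and the $\ell$ path edges contribute $|n-2j|$ with $n=2k+1+\ell$, so either there are evenly many odd terms or all terms are even. An even cycle $C_{2k}$ with a pendant path of odd length $\ell\le 2k$ gives $Mo=2k\ell+\ell(2k-1)=\ell(4k-1)$, which for $\ell=1$ yields exactly the residue class $3\pmod 4$ (this is the paper's construction for $p=4m-1$) but for $\ell\ge 3$ produces only sparse composite values; in particular small targets such as $9$, $13$, $17$, $25$, $29$ are unreachable by any cycle-plus-pendant-path graph, and longer paths ($\ell>2k$) do not rescue the family since its smallest values already exceed these targets. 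So the residue class $1\pmod 4$ is left uncovered, which is exactly where the paper has to work hardest: it takes $C_{2m+1}$, attaches a pendant vertex at $v_{m+1}$ \emph{and} adds a chord $v_2v_{2m+1}$, and verifies that exactly five edges have nonzero contributions summing to $4m+1$. Your proposal anticipates that ``two sub-families or two free parameters'' might be needed, but without an explicit second construction for $p\equiv 1\pmod 4$ the proof is incomplete at its hardest point.
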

\begin{proof}
For $p = 0$ any cycle $C_n$ with $n \geq 3$ will do, as well as any complete
graph $K_n$, $n \geq 1$. $K_1$ is the smallest graph achieving this value,
while there is no largest graph with $Mo(G) = 0$.

For $p = 2$ take $G = P_3$.

If $p$ is an even number $p = 2m \geq 4$, take a complete graph $K_{m+1}$ and
attach a vertex $v$ to one of its vertices, say to the vertex $u$. The new
graph $G$ has $m + 2$ vertices. All edges of $K_{m+1}$ not incident with $u$
contribute $0$ to $Mo(G)$, while each of $m$ edges of $K_{m+1}$ incident
with $u$ contributes $1$. Finally, the contribution of the edge $uv$ is equal
to $m$. By summing all contributions we obtain
$$Mo(G) = m\cdot 1 + 1 \cdot m = 2m = p.$$

Let us look now at the odd numbers of the form $p = 4m - 1$ for $p \geq 7$,
i.e., for $m \geq 2$. Take an even cycle $C_{2m}$ and construct a new graph $G$
by attaching a new vertex $v$ to one of its vertices, say to the vertex $u$.
Again, the edge $uv$ contributes $2m - 1$ to $Mo(G)$, while each of the edges
of $C_{2m}$ contributes $1$. Hence,
$$Mo(G) = 2m \cdot 1 + 2m - 1 = 4m-1.$$
Finally, take an odd number $p = 4m+1$ for some $m \geq 2$. Take an odd
cycle $C_{2m+1}$ and label its vertices by $v_1, v_2, \ldots , v_{2m+1}$.
Attach a new vertex $v$ to $v_{m+1}$ and add an edge between $v_2$ and
$v_{2m+1}$. It can be verified by direct computation that only five edges
have nonzero contributions: The pending edge $v v_{m+1}$ contributes $2m$,
the new edge $v_2v_{2m+1}$ contributes $1$, as well as the edge
$v_{m+1}v_{m+2}$. The edge $v_1v_2$ contributes $m$, while the edge
$v_1v_{2m+1}$ contributes $m-1$. All other edges contribute zero. By adding
all contributions we obtain $Mo(G) = 4m+1 = p$. 
\end{proof}

The above constructions are not unique: The even numbers can be also realized
with fewer edges, starting from odd cycles and attaching one pendent vertex
(for $p = 8m$ and $p = 8m + 4$), two pendant vertices attached to the same
vertex of the cycle (for $p = 8m + 2$), and the numbers of the form $p = 8m + 6$
can be realized by taking an even cycle and identifying one of its vertices
with a vertex of a triangle. Those alternative constructions also show
that any integer different from 1, 3, and 5 can be realized by a chemical 
graph.

For the remaining three values, 1, 3, and 5, we first show that any graph
achieving them must be at least 2-connected and 2-edge-connected.

\begin{lemma} \label{2con}
Let $G$ be a simple connected  graph. If $Mo(G) \in \{1,3,5\}$, then $G$
has no bridge and no cut vertex. 
\end{lemma}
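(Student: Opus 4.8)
The plan is to argue by contradiction: suppose $G$ is a simple connected graph with $Mo(G) \in \{1,3,5\}$ that has a cut vertex or a bridge, and derive a lower bound on $Mo(G)$ that exceeds $5$. The key structural observation is that if $G$ has a cut vertex $c$, then $V(G) \setminus \{c\}$ splits into two nonempty parts $A$ and $B$ (where $B$ collects all but one of the components of $G - c$), and every edge on a shortest path crossing from $A$ to $B$ must pass through $c$; in particular, there is at least one bridge or one edge incident with $c$ separating a ``small'' side from a ``large'' side. So it suffices to handle the case where $G$ has a bridge $e = uv$, since a graph with a cut vertex but no bridge still has an edge whose removal-type analysis (via the vertex cut) produces the same contribution estimate — actually, cleaner: a connected graph with a cut vertex always has a block that is an endblock, and if that endblock is a single edge we have a bridge, while if it is $2$-connected we can still locate an edge $uv$ with $u$ the cut vertex attaching the endblock.

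First I would reduce to the bridge case. If $e = uv$ is a bridge, let $G_u$ and $G_v$ be the two components of $G - e$ containing $u$ and $v$ respectively, with $a = |V(G_u)|$ and $b = |V(G_v)|$, so $a + b = n$ and $a,b \geq 1$. Then $n_u = a$ and $n_v = b$, giving $\varphi_G(e) = |a - b|$. If $a \neq b$ this contributes at least $1$, and we need to show the other edges force the total above $5$ unless the graph is tiny — but small graphs with bridges are easily enumerated and checked. The real content is bounding the contributions of edges inside $G_u$ and $G_v$: for an edge $f = xy$ inside $G_u$, all of $V(G_v) \cup \{v\}$ lies on the same side of $f$ (the side through $u$), so $\varphi_G(f) = |(\text{something} + b) - (\text{something})| $, and summing these with the bridge contribution one shows $Mo(G) \geq b \cdot (\text{number of edges in } G_u) + \cdots$, which grows quickly.

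The main obstacle — and where the argument needs care — is the case of a cut vertex that is not incident with any bridge, e.g.\ two triangles sharing a vertex, or a longer ``chain of blocks.'' Here no single edge has a huge contribution, so one must add up many small positive contributions. I would argue that if $c$ is a cut vertex with $G - c$ having components whose vertex sets partition into parts of sizes summing appropriately, then for each block $B_i$ attached at $c$ on the ``small side,'' \emph{every} edge of $B_i$ incident with $c$ gets contribution at least the size of the ``large side'' minus a correction, and there are at least two such edges (since $B_i$ is $2$-connected or a bridge), forcing the total well past $5$; the genuinely small configurations (two triangles at a vertex gives $Mo = 2+2+2 = 6 > 5$; triangle plus pendant gives a bridge, $Mo = 2+2+2$ wait — $K_3$ with a pendant vertex has $Mo = 1+1+3 = 5$?). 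Let me flag that last point as exactly the delicate boundary: one must verify that the smallest graph with a cut vertex, namely $K_3$ with a pendant edge, indeed has $Mo \geq 6$ or is otherwise excluded, and similarly for $P_3$-type and $P_4$-type small trees whose Mostar indices ($2$, $6$, \dots by Lemma~\ref{even} and Proposition~\ref{small}) never equal $1, 3, 5$ — so trees are immediately disposed of by Lemma~\ref{even}, and the remaining work is purely about graphs that are neither $2$-connected nor $2$-edge-connected but contain a cycle. A careful case split on the smallest such graphs, combined with the ``many small contributions'' bound for larger ones, completes the proof.
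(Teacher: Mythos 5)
Your overall strategy -- rule out bridges and cut vertices by lower-bounding edge contributions, and dispose of small graphs by direct inspection -- is the same as the paper's, but the execution has two genuine gaps that prevent the sketch from being a proof.

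First, your key quantitative claim for the bridge case is false. You assert that for an edge $f=xy$ inside $G_u$, all of $V(G_v)$ lies on one side of $f$, so that summing over internal edges gives $Mo(G)\ge b\cdot|E(G_u)|+\cdots$. But a vertex $z\in V(G_v)$ reaches both $x$ and $y$ through $u$, so $z$ sides with $x$ only if $d_{G_u}(u,x)<d_{G_u}(u,y)$; when $x$ and $y$ are equidistant from $u$, the entire set $V(G_v)$ contributes to neither $n_x$ nor $n_y$. The paper's own realizability construction ($K_{m+1}$ with a pendant vertex, where all $\binom{m}{2}$ edges avoiding the attachment vertex contribute $0$) is a direct counterexample to your bound. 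The contribution that actually does the work in the paper is concentrated on very few edges: the pendant/bridge edge itself (contribution $\ge 6$ once $n\ge 8$, or $|n_v-n_w|\ge 1$ for a balanced bridge) together with the edges incident to a nearby vertex of degree $\ge 3$, each of which is forced to have contribution $\ge 2$ or $\ge 3$. You never isolate such a vertex, and without it the "many small contributions" bound for the cut-vertex-without-bridge case is also not actually derived.

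Second, the small-case analysis is explicitly left open, and the spot checks you do perform are wrong: $K_3$ with a pendant vertex has $Mo=2+1+1+0=4$ (the pendant edge contributes $|3-1|=2$, not $3$), and two triangles sharing a vertex give $Mo=4\cdot 2=8$, not $6$. The paper closes this boundary by an exhaustive check of all graphs on at most $6$ vertices and all order-$7$ graphs with a pendent edge; some such finite verification is unavoidable, since the asymptotic bounds only kick in for $n\ge 7$ or $8$. As written, your argument neither supplies the correct general bounds nor completes the finite check, so the conclusion $Mo(G)\ge 6$ is not established.
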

\begin{proof}
Let $n$ denote the order, i.e., the number of vertices of $G$, and let
$Mo (G) \in \{1,3,5\}$.

We start by checking all graphs of order at most $6$, and by observing that
none of them has the Mostar index $1,3$ or $5$. Moreover, we check all
graphs of order $7$ which have a pendent edge and obtain the same result.
If $G$ has at least 8 vertices and $G$ has a pendent edge $e$, the contribution
of $e$ is at least 6, and hence, $Mo(G) \ge 6$. 

Now we consider graphs of order at least $7$ with no pendent edges.

Suppose that $vw$ is a bridge of $G$, and assume that $G_v$ and
$G_w$ are the components of $G- vw$ containing $v$ and $w$, respectively.
Let $n_v \ge n_w$. We consider two cases. \\
{\bf Case 1}: $n_v=n_w$. If $\deg(v)$ (or $\deg(w)$) is at least $3$, then
the edges incident to $v$ and $w$ have contributions at
least $2$, hence $Mo(G) \ge 6$. If $\deg(v)= \deg(w)=2$, then there is a
vertex, say $z\in V(G_v)$ (or $ z\in V(G_w)$), with minimum distance from
$v$ and $\deg(z)\ge 3$.       
By applying the same argument to $z$, we get $Mo(G)>6$. That is a
contradiction. \\
{\bf Case 2}: $n_v \ge n_w +1$. Clearly we have $\phi(vw) \ge 1$. 
Let $z \in V(G_w)$ be a vertex with the minimum distance from $w$ and
$\deg(z) \ge 3$. Again, the edges incident with $z$ have contribution at least
$3$ and from this follows $Mo(G) \ge 7$, a contradiction again. 
Hence, $G$ cannot have a bridge.

It remains to show that $G$ has no cut vertices.
Suppose, to the contrary, that $v$ is a cut vertex of $G$. It is clear that, if
$\deg(v) \le 3$, then $G$ contains a bridge and hence we have $Mo(G)\ge 6$.
Thus $\deg(v) \ge 4$. Let $G -v = G_1 \cup G_2$ and let $n_i$ denote the order
of $G_i$ for $i=1,2$. We consider two cases. First we suppose $n_1=n_2$. Then
the edges incident with $v$ have contribution at least $2$. So we have
$Mo(G) \ge 8$, a contradiction. Second, let $n_1 \ge n_2 +1$. In this case,
the edges between vertex $v$ and vertices of $G_2$ have contribution at least
$3$. Thus $Mo(G) \ge 6$, again a contradiction. Hence $G$ cannot have a
cut-vertex, and must be 2-connected and 2-edge-connected.
\end{proof}

The above Lemma allows us to prove that no graph $G$ has the Mostar index 
equal to 1.

\begin{proposition} \label{Mo1}
Let $G$ be a simple connected graph. Then $Mo(G) \neq 1$.
\end{proposition}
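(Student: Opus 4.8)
The plan is to combine Lemma~\ref{2con} with a short parity argument to rule out $Mo(G)=1$ entirely. By Lemma~\ref{2con}, any graph $G$ with $Mo(G)=1$ must be $2$-connected and $2$-edge-connected; in particular it has no pendent vertices, so every vertex has degree at least $2$ and $|E(G)| \ge |V(G)| = n$. Since $Mo(G)$ is a sum of nonnegative edge contributions $\varphi_G(e)$, having $Mo(G)=1$ forces exactly one edge $e_0=uv$ to contribute $1$ and every other edge to contribute $0$, i.e.\ to be equieffective.

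\smallskip

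Next I would examine the single non-equieffective edge $e_0 = uv$ more carefully. Its contribution being $1$ means $|n_u - n_v| = 1$, so $n$ has one parity forced: writing $n_u + n_v \le n$, if $n_u - n_v = 1$ then $n_u + n_v$ is odd, hence $n \ge n_u+n_v$ together with all \emph{other} edges being equieffective (for which $|n_{u'} - n_{v'}| = 0$ needs $n_{u'}+n_{v'}$ even when they partition, or simply $n$ of a compatible parity) should be pushed to a contradiction. The cleanest route is a global parity/counting identity: for any edge $e=xy$ in a graph with no equidistant vertices the quantity $n_x - n_y$ has the same parity as $n$ (indeed $n_x + n_y \le n$ and $n_x \equiv n - n_y - (\#\text{equidistant})$). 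Rather than chase that in full generality, I would instead argue directly on the structure: a $2$-edge-connected graph on $n \ge 7$ vertices with all but one edge equieffective is highly constrained, and one can localize around $e_0$.

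\smallskip

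Concretely, the key step I expect to do the real work is a local analysis at the ends of $e_0=uv$. Since $G$ is $2$-edge-connected, deleting $e_0$ leaves $G$ connected, so there is a path from $u$ to $v$ avoiding $e_0$; pick a shortest such path and consider the edge $e_1$ on it incident to $u$ (or to $v$). One shows that if $n_u \ne n_v$ then the ``sides'' of nearby edges cannot all balance out: the vertex count closer to $u$ than to $v$ differs by exactly $1$ from the count closer to $v$, and this imbalance must be ``visible'' to at least one neighboring edge, forcing that edge's contribution to be at least $1$ as well, giving $Mo(G) \ge 2$. Combined with the already-handled small cases ($n \le 6$, and $n = 7$ with a pendent edge) and the $n \ge 8$ pendent-edge case from Lemma~\ref{2con}'s proof, this leaves no room for $Mo(G) = 1$.

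\smallskip

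The main obstacle is making the ``imbalance must be visible to a neighboring edge'' claim precise without a clean general lemma about how $n_x$ changes along an edge. I would address this by using the elementary fact that for adjacent edges $xy$ and $yz$ sharing vertex $y$, the sets of vertices closer to $x$ than $y$, closer to $y$ than $z$, etc., overlap in a controlled way; in a bipartite-free or girth-constrained setting (which $2$-connectivity plus small $Mo$ tends to force) one can actually pin down $n$ and the local structure completely. If a fully general argument proves awkward, the fallback is to show $Mo(G)=1$ forces $n$ to be small (say by bounding the diameter, since large diameter creates many non-zero contributions along a diametral path) and then defer to the finite check already invoked in Lemma~\ref{2con}.
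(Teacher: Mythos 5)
Your setup is right (Lemma~\ref{2con} gives $2$-connectivity, and $Mo(G)=1$ forces exactly one edge $e_0=uv$ with $\varphi(e_0)=1$ and all other edges equieffective), and your high-level intuition — that the imbalance at $e_0$ must be ``visible'' on a second $u$--$v$ path — is exactly the right idea. But the proposal has a genuine gap: the step you yourself flag as ``the main obstacle'' is the entire content of the proof, and neither of your suggested routes closes it. The parity attempt is a dead end: $n_{u'}=n_{v'}$ does not constrain the parity of $n$ because $n_{u'}+n_{v'}$ can fall short of $n$ by a different number of equidistant vertices for each edge (this is why Lemma~\ref{even} is stated only for trees). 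The fallback of bounding the diameter is likewise not carried out and is not obviously true.

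The missing tool is the identity $\varphi_G(xy)=|\tr_G(x)-\tr_G(y)|$ for \emph{adjacent} vertices $x,y$: since $d(w,x)-d(w,y)\in\{-1,0,1\}$ for every $w$, summing over $w$ gives $\tr(x)-\tr(y)=n_y-n_x$. With this, your ``visibility'' claim becomes a one-line telescoping argument, which is precisely what the paper does: $\varphi(e_0)=1$ means $\tr(u)=\tr(v)+1$, every other edge being equieffective means its endpoints have \emph{equal} transmissions, and yet $2$-connectivity supplies a $u$--$v$ path $P:\ u=v_0,v_1,\dots,v_k=v$ with $k\ge 2$ avoiding $e_0$. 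Since the transmissions along $P$ start at $\tr(u)$ and end at $\tr(v)\ne\tr(u)$, some edge $v_iv_{i+1}$ of $P$ has $\tr(v_i)\ne\tr(v_{i+1})$, hence nonzero contribution, so $Mo(G)\ge 2$ — a contradiction. I recommend replacing your local analysis of ``sides of nearby edges'' with this transmission identity; without it, the argument as written does not go through.
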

\begin{proof}
Suppose, to the contrary, that $Mo(G)=1$ for some simple connected graph
$G$. From Lemma \ref{2con} it follows that $G$ is 2-connected. 
Since $Mo (G) \geq \varphi (e)$ for any $e \in E(G)$, there must be
exactly one edge of $e$ with nonzero contribution, and this contribution
must be equal to 1. 
Let $u$ and $v$ be the two end-vertices of $e$ with $\tr(u)=\tr(v)+1$. Since
$Mo(G)=1$, all other vertices must have the same transmission. 
From 2-connectivity of $G$ it follows that there must be two internally
disjoint paths connecting $u$ and $v$. Let
$P: v_0=u, v_1, v_2, \cdots,  v_n=v$   ($n\ge 2$) be one such path
between $u$ and $v$. Now there is a vertex $v_i$ ($0\le i \le n-1$) such
that $\tr(v_i) \neq \tr(v_{i+1})$. This implies that $|\tr(v_i)-\tr(v_{i+1})|=1$
and hence $Mo(G)\ge 2$, a contradiction.   
\end{proof}

The next unsolved case, $Mo(G) = 3$, can be realized either by a graph
having three edges with the contribution one, or one edge with contribution
1 and one edge with contribution 2. The next result eliminates the
second case.

\begin{lemma} \label{transm}
Let $G$ be a simple, 2-edge-connected graph with $Mo(G) = 3$. Then all of
its vertex transmissions must belong to a set of exactly two consecutive
positive integers.
\end{lemma}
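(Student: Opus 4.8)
The plan is to analyze the possible edge-contribution multisets that sum to $3$ and rule out the one containing an edge of contribution $2$. Since $Mo(G) = 3$, the nonzero edge contributions are either three edges each contributing $1$, or one edge contributing $1$ and one edge contributing $2$; our goal is to show the latter is impossible, which forces all nonzero contributions to equal $1$, and then to deduce the claim about transmissions. Recall that the contribution of an edge $e = uv$ equals $|n_u - n_v|$, and note that along any edge $uv$ the transmissions satisfy $|\tr(u) - \tr(v)| = \varphi_G(e)$ whenever $G$ is bipartite-free of certain obstructions; more robustly, since $G$ is $2$-edge-connected (by Lemma~\ref{2con}), for any edge $uv$ one has $\tr(v) - \tr(u) = n_u - n_v$ by the standard transmission-difference identity across an edge. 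Hence an edge of contribution $2$ would create a transmission gap of $2$ between its endpoints.

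First I would exploit $2$-connectivity (guaranteed by Lemma~\ref{2con}, since $Mo(G) = 3 \in \{1,3,5\}$) exactly as in the proof of Proposition~\ref{Mo1}: if two vertices $u, v$ have $|\tr(u) - \tr(v)| \geq 2$, then along any path between them the transmission changes by total variation at least $2$, but each single step contributes $|\tr(v_i) - \tr(v_{i+1})| = \varphi(v_i v_{i+1})$ to $Mo(G)$; walking along \emph{two} internally disjoint paths (which exist by Menger) would then accumulate contribution $\geq 4 > 3$ unless the two paths share their first or last edge — but in a $2$-connected graph on enough vertices we can choose the two paths to be internally disjoint and edge-disjoint near the endpoints. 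This yields $Mo(G) \geq 4$, a contradiction. Therefore every pair of vertices has transmissions differing by at most $1$, so all transmissions lie in a set of at most two consecutive integers; since $Mo(G) = 3 \neq 0$, the graph is not distance-balanced, so not all edges are equieffective, hence the transmissions are not all equal, giving \emph{exactly} two consecutive values. Positivity is immediate since $G$ is connected with $n \geq 2$ (indeed $n \geq 7$ by the reductions in Lemma~\ref{2con}), so every transmission is a positive integer.

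The main obstacle I anticipate is handling the case where the two disjoint paths produced by Menger's theorem are \emph{short} — specifically, when $u$ and $v$ are adjacent and lie on a short cycle, so that one "path" between them is just the edge $uv$ itself and the other is an $u$–$v$ path of length $2$ or $3$. In that degenerate situation the double-counting argument must be carried out carefully: the edge $uv$ itself contributes $2$, and the second path must then contribute $0$ at each of its edges, forcing a delicate local structure (a short cycle with all other edges equieffective) whose global transmissions I would need to examine directly to extract the remaining contribution and reach $Mo(G) \geq 4$, or else show such a configuration is outright impossible in a $2$-edge-connected graph with $n \geq 7$. I would dispose of this by a short case analysis on the length of the second path, using that the internal vertices of that path have their transmissions pinned down by the $\varphi = 0$ conditions, which then contradicts the transmission gap of $2$ at the endpoints.
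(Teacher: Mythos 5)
Your proof is correct, and its second half takes a genuinely different route from the paper's. Both arguments rest on the identity $\tr(v)-\tr(u)=n_u-n_v$ for adjacent $u,v$, so that $\varphi_G(uv)=|\tr(u)-\tr(v)|$, and both dispose of an edge of contribution $2$ by summing transmission differences along a second $u$--$v$ path guaranteed by $2$-edge-connectivity: the triangle inequality forces that path to carry total contribution at least $2$, whence $Mo(G)\ge 4$. Where you diverge is in excluding a third transmission value. The paper first reduces all contributions to $\{0,1\}$, partitions $V$ into layers $V_1,V_2,V_3$ below/at/above the middle transmission $k$, notes $Mo(G)\ge|E_1|+|E_2|$ for the two inter-layer edge sets, and extracts a cut edge from $|E_i|=1$. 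You instead apply the two-disjoint-paths bound directly to an \emph{arbitrary} (not necessarily adjacent) pair $u,v$ with $|\tr(u)-\tr(v)|\ge 2$: two internally disjoint paths are edge-disjoint, each carries total contribution at least $2$, so $Mo(G)\ge 4$ in one stroke. Your version is more uniform, subsuming the contribution-$2$ case and the three-value case in a single argument; it invokes vertex $2$-connectivity via Lemma \ref{2con}, though ordinary Menger for $2$-edge-connectivity would suffice since only edge-disjointness is used. The closing step (at least two values because $Mo(G)\neq 0$, consecutiveness because adjacent transmissions differ by at most $1$, positivity because $G$ is connected on at least two vertices) is fine.

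Two small remarks. The ``obstacle'' you anticipate is not real: two distinct internally disjoint $u$--$v$ paths are automatically edge-disjoint (a shared edge would have both endpoints among the common vertices $u,v$, forcing both paths to be the single edge $uv$), and the scenario ``$uv$ contributes $2$ while every edge of the second path contributes $0$'' is immediately contradictory, since the second path's contributions sum to at least $|\tr(u)-\tr(v)|=2$; no case analysis on its length is needed. Also, the clause about ``$G$ bipartite-free of certain obstructions'' is spurious --- the transmission-difference identity holds for every edge of every connected graph.
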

\begin{proof}
First of all, suppose that there exists an edge $uv$ whose contribution is at
least two. Since $uv$ is not a cut edge, then there exists a $u-v$ path
$P$ not containing this edge. Then, by applying the triangle inequality on
the transmission differences across all the edges contained in $P$, we see that
the sum of their contributions is at least two. Thus, the Mostar index is at
least four, which is not possible.

These arguments assure that any graph with a Mostar index of three cannot
have an edge whose contribution is greater than one. Thus, each edge has a
contribution of either 0 or 1 and no edge can be a cut edge. Finally, since
all the edge contributions are either 0 or 1, it is easy to see that the set
of all attained transmissions must comprise consecutive integers. Suppose
that there are at least three of these integers $k - 1$, $k$, $k + 1$.
Without loss of generality, we divide the vertex set $V$ into three
nonempty subsets $V_1$, $V_2$, $V_3$ such that: \\
    $V_1$ contains the vertices whose transmission is below $k$; \\
    $V_2$ contains the vertices whose transmission is exactly $k$; \\
    $V_3$ contains the vertices whose transmission is greater than $k$. \\
Let $E_1$ be the set of edges which have one end-vertex $V_1$ and another
in $V_2$, and let $E_2$ be the set of edges that have one end-vertex in $V_2$
and the other in $V_3$. It is obvious that both of these sets are necessarily
nonempty, since the graph is connected and there can be no edge that has one
end-vertex from $V_1$ and the other from $V_3$. Moreover, it is clear that the
Mostar index cannot be lower than $|E_1| + |E_2|$. This means that at least
one of the sets $E_1$ and $E_2$ contains precisely one element. However,
this edge must be a cut edge, which is impossible, as we have shown before.
From here, it follows that the set of attained transmissions must consist
of two consecutive integers.
\end{proof}

Unlike Lemma \ref{2con}, the above Lemma does not lead to any contradictions
and cannot be used to rule out the existence of graphs with Mostar index
equal to 3. Instead, we used the obtained structural properties to narrow
the class of candidates and to do a computer-assisted search. Indeed, by
performing exhaustive search on all reasonably small ($n \leq 11$) 
2-connected and 2-edge-connected graphs, we have found one graph on
9 vertices and 2 graphs on 11 vertices with Mostar index equal to 3. As the
same search also returned ten graphs on 11 vertices with Mostar index
equal to 5, we have decided not to further pursue their characterization
along the lines of Lemma \ref{transm}. The smallest graph with the Mostar
index equal to 3 is shown in the left panel of Fig. \ref{mo3mo5}; one graph
on 11 vertices with Mostar index equal to 5 is shown in the right panel
of the same figure. Notice that the left graph is chemical; hence $p=3$
can also be realized by chemical graphs. 
\begin{figure}
\begin{center}
\includegraphics[scale=0.2]{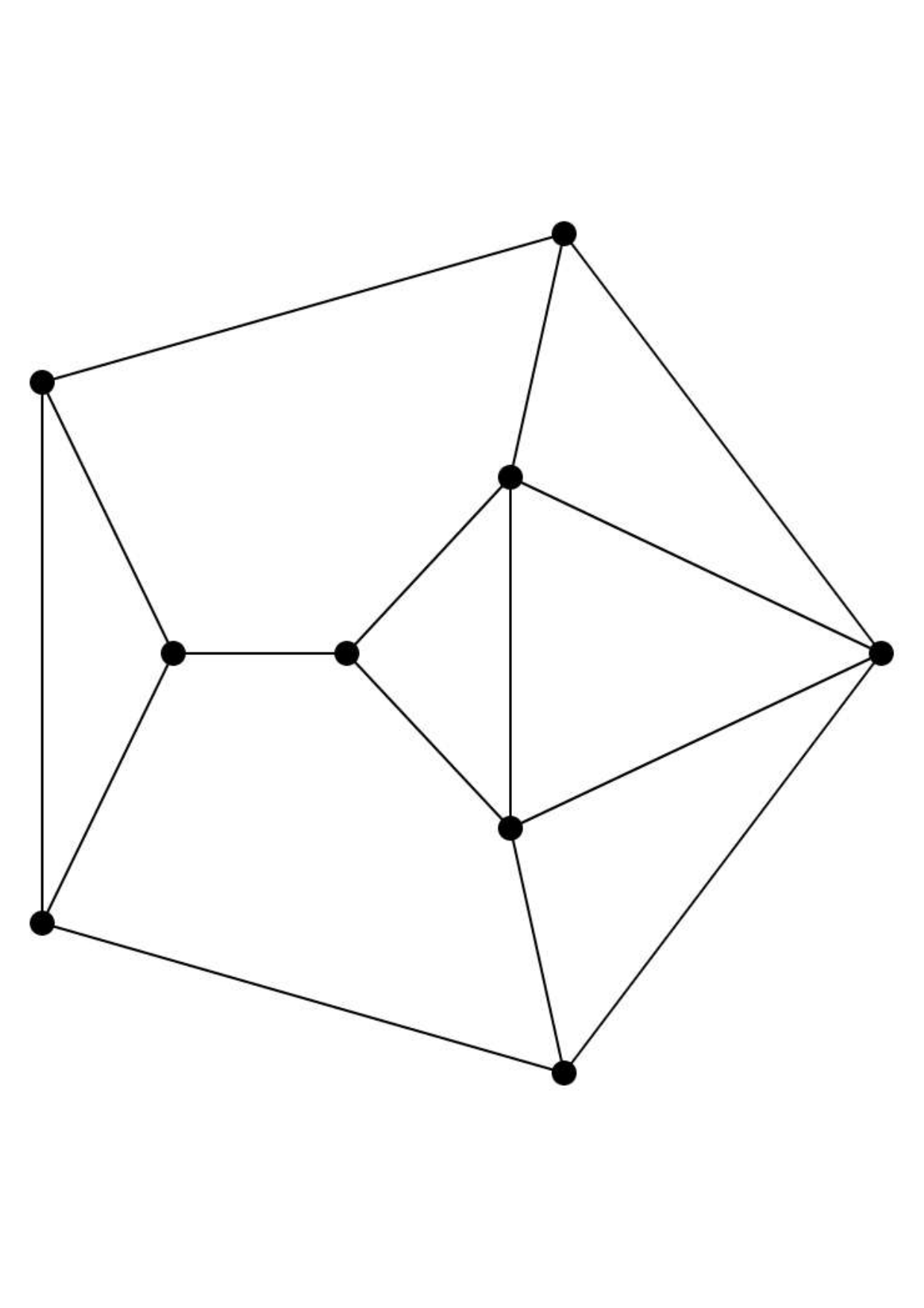}\,\,\,\quad\includegraphics[scale=0.2]{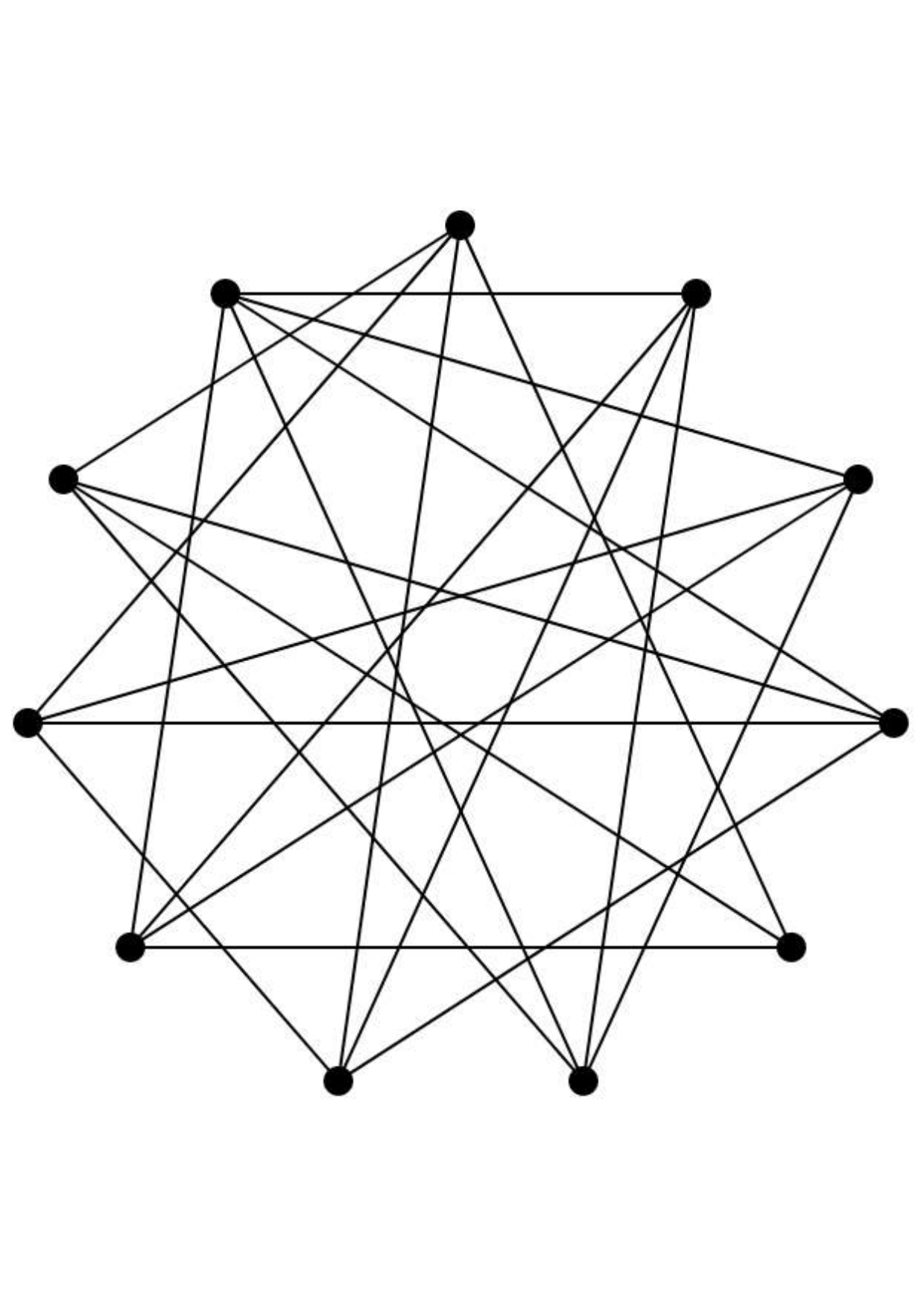}
\end{center}
\caption{Smallest graphs with $Mo(G)=3$ (left) and $Mo(G)=5$ (right).}
\label{mo3mo5}
\end{figure}

Our main result now follows by combining the search results with  the
structural characterizations provided by the above Lemmas.

We conclude this section by exhibiting another family with nice layered
structure realizing odd integers greater than one. Take the cycle $C_p$ and the
empty graph $\overline{K_p}$ on $p$ vertices. Take their join, i.e., join every
vertex of $C_p$ with every vertex of $\overline{K_p}$. Remove the edges of
one perfect matching from the obtained graph. Finally, take a copy of $K_p$
and connect its vertices with vertices of $\overline{K_p}$ by a matching of
size $p$. We leave to the readers to verify that the obtained graph on
$3p$ vertices indeed has the Mostar index equal to $p$. Examples for $p=3$ and
$p = 5$ are shown in Fig. \ref{nino35}. Notice that the graph in the left
panel is the same graph shown in the left panel of Fig. \ref{mo3mo5}.
\begin{figure}
\begin{center}
\includegraphics[scale=0.35]{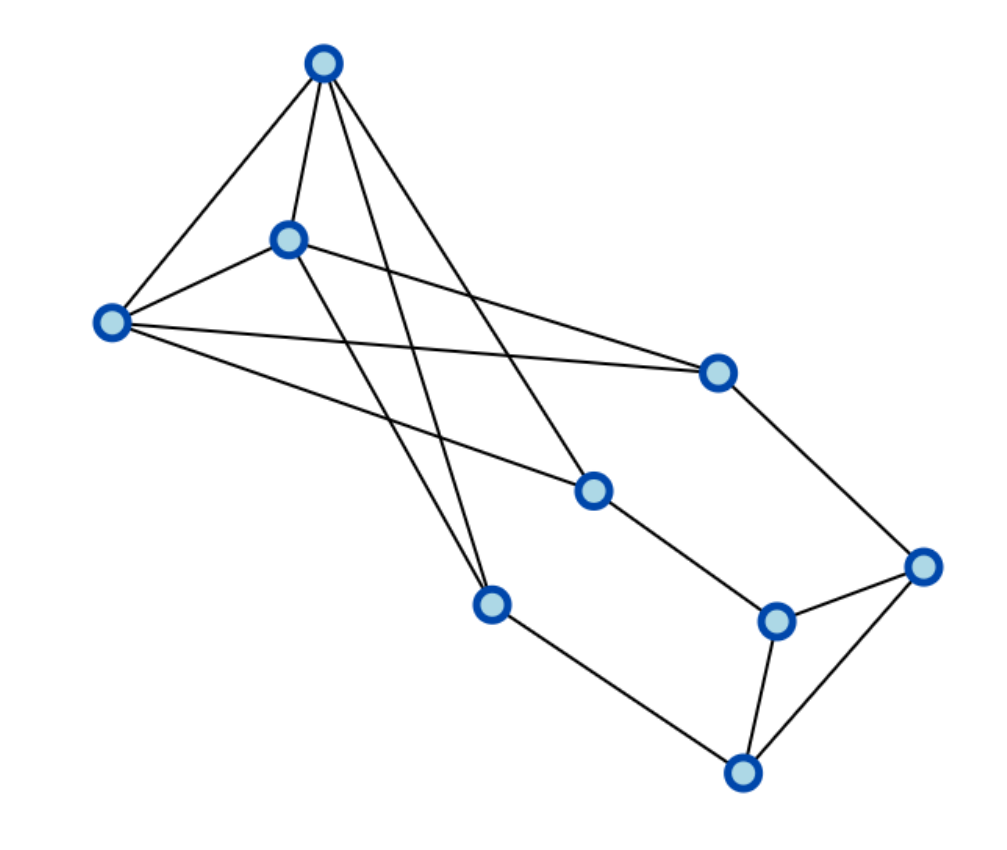}\,\,\,\quad\includegraphics[scale=0.35]{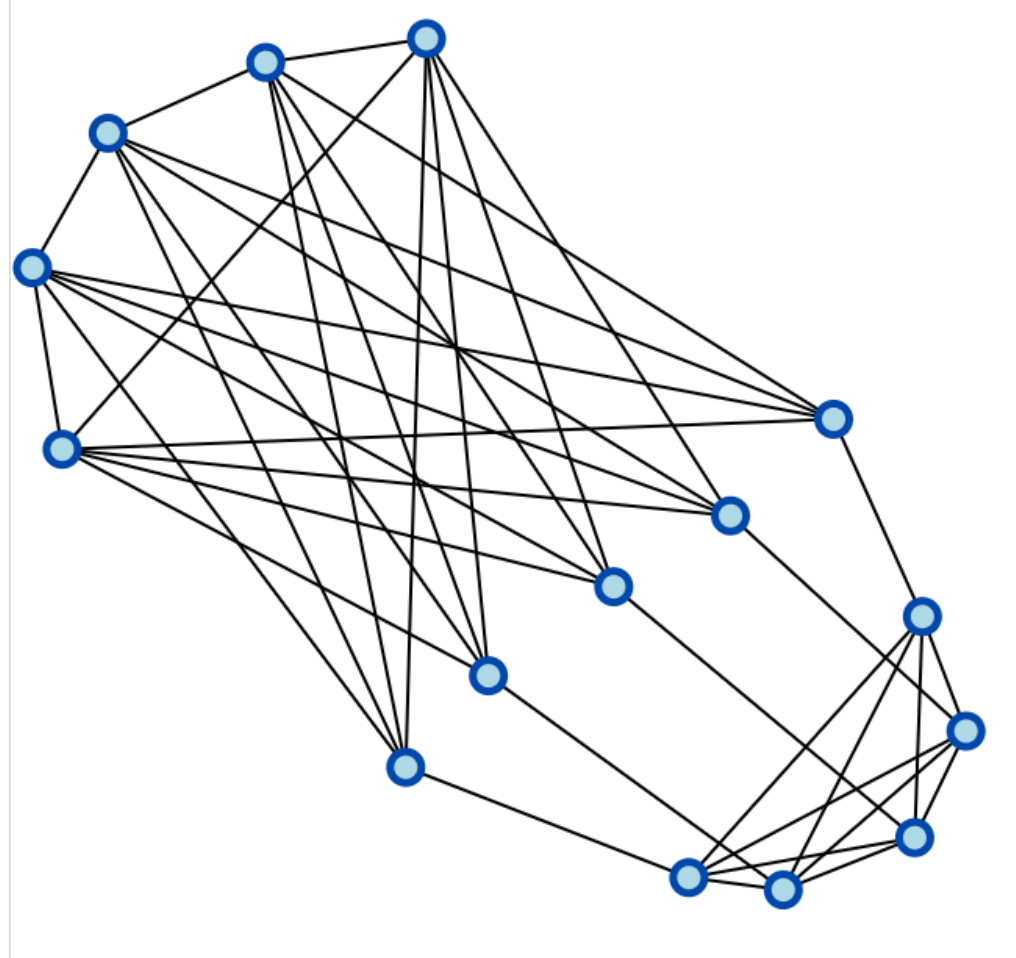}
\end{center}
\caption{Three-layer graphs with $Mo(G)=3$ (left) and $Mo(G)=5$ (right).}
\label{nino35}
\end{figure}

\section{Empirical observations}

In this section we present, in tabular and in graphic form, some results of
our exhaustive search on small graphs. Table 1 shows, in the first column,
the number of vertices $n$, and in the second column, the total number of 
connected graphs of order $n$. The next three columns show, from left to
right, the minimum value of the Mostar index, the maximum value, and the
most common value (the mode of the distribution). In all three columns, 
the number of graphs attaining the shown values is shown in parentheses.
The last column shows the average value of the Mostar index over all graphs
of order $n$.

\begin{table}[ht]
\centering
$\begin{array}[valign=c]{c|c|c|c|c|c }
n \backslash k& {\rm No.\, of\, graphs} & {\rm min\, Mo} & {\rm max\, Mo} & {\rm peak\, Mo} & {\rm average\, Mo}   \\[1.5ex]
\hline
1 &1 & 0 (1)\\
2 &1 & 0 (1)& 4 & 1\\
3 &2 & 0 (1)& 2 (1)& 2 (1) & 1. \\
4 &6 & 0 (2)& 6 (1)& 4 (3)& 3. \\
5 &21 & 0 (2)& 12 (2)& 8 (6)& 6.857\\
6 &112 & 0 (5)& 24 (1)& 12 (21)& 11.67\\
7 &853 & 0 (4)& 40 (1)& 20 (95)& 18.129\\
8 &11117 & 0 (15)& 60 (2)& 24 (847)&  25.402\\
9 &261080 & 0 (23)& 90 (1)& 32 (14652)& 33.741\\
10 &11716571 & 0 (120)& 126 (1)& 40 (545116)& 43.174\\
11 &1006700565 & 0 (313)& 168 (2)& 52 (39598746)& 53.96\\
\end{array}$
\caption{Some statistics of Mostar indices of small graphs.}\label{tab:r_k(n)}
\end{table}

The sequence in the second column is the number of simple connected graphs
on $n$ vertices; it appears as A001349 in the {\em On-Line Encyclopedia of 
Integer sequences} \cite{oeis}. All elements in the third column outside
of parentheses are zero,
reflecting the fact that distance-balanced graphs exist for all positive
numbers of vertices. More interesting in this column is the sequence in 
parentheses, enumerating such graphs. It does not (yet) appear in the OEIS.
In the fourth column, both sequences seem to be interesting. The one in the
parentheses, periodic with period 3, indicates that the divisibility by 3
plays a role in the number (and in the structure) of graphs maximizing the
Mostar index. This is confirmed by the fact that the other sequence appears
as A307559 in the OEIS; its $n$-th term is given there as
$$\left \lfloor \frac{n}{3} \right \rfloor \left ( n - \left \lfloor \frac{n}{3} \right \rfloor \right ) \left ( n - \left \lfloor \frac{n}{3} \right \rfloor -1 \right ).$$
This expression is exactly the Mostar index of the split graphs $S_{2n/3,n/3}$
conjectured
in \cite{Tom2018} to maximize it. (A {\em split graph} $S_{a,b}$ is obtained
by joining every vertex of the complete graph $K_a$ to every vertex of an
empty graph on $b$ vertices.) It was shown in \cite{geneson} that the 
conjecture does not hold and that greater values are achieved for complete
bipartite graphs $K_{\lfloor \alpha /n \rfloor, n - \lfloor \alpha /n \rfloor}$
for $\alpha = \frac{1}{2} \left ( 1 - \frac{1}{\sqrt 3} \right )$ for big
enough $n$, but the conjectured values are still true in the considered range.
We notice that that neither of sequences in the peak Mo column
appears in the OEIS; at the moment we have no plausible explanation for 
the numbers appearing there. 

Table 1 contains just the extremes and the mode of distribution for a given
number of vertices. In Fig. \ref{fig-distr} we plot the full distribution
of Mostar indices for connected graphs on 8 vertices. The most striking
property is the strong difference between the number of graphs with
\begin{figure}
\begin{center}
\includegraphics[scale=0.4]{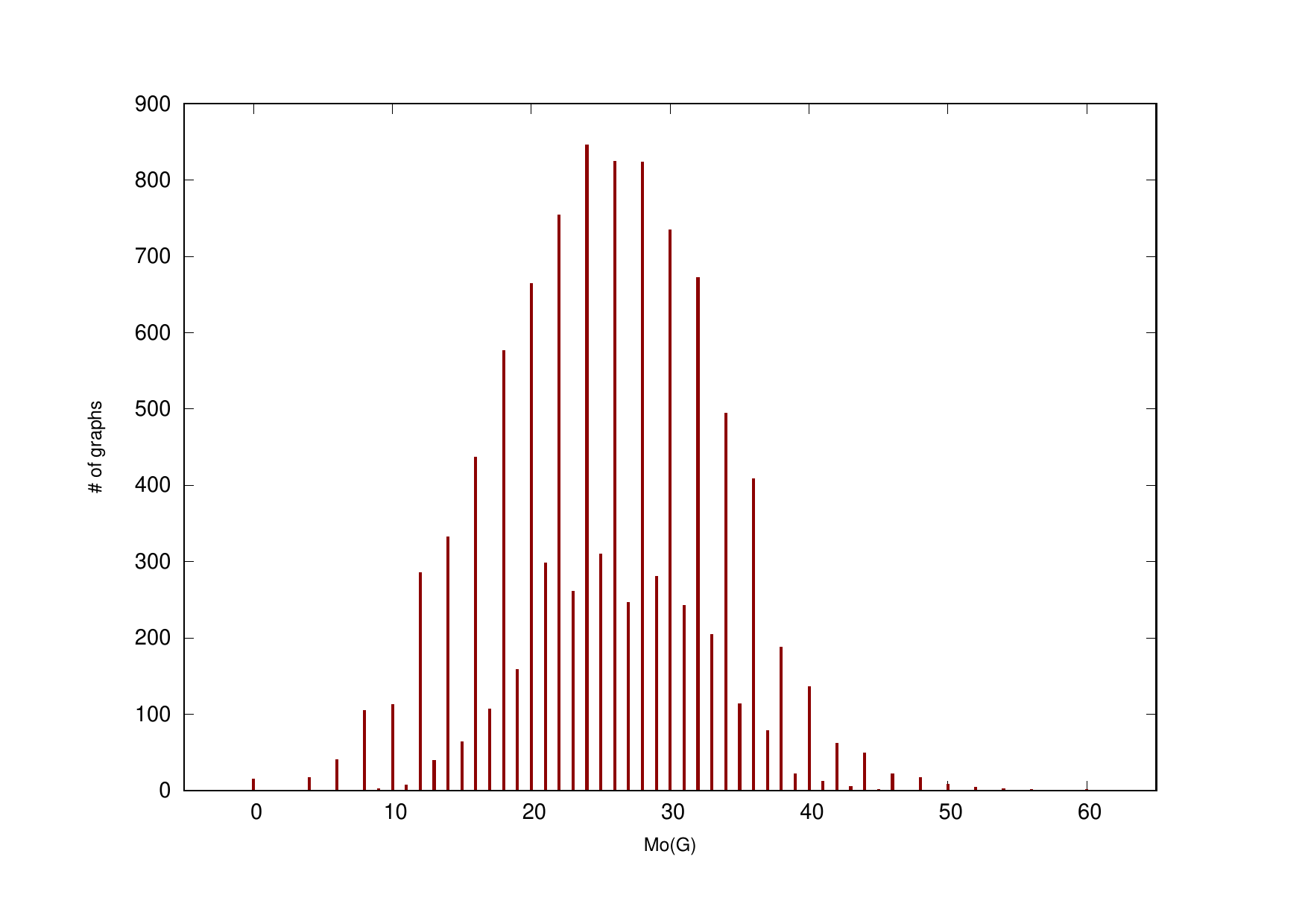}
\end{center}
\caption{Distribution of values of the Mostar index for graphs with 8 vertices.}
\label{fig-distr}
\end{figure}
even and with odd values of the Mostar index. Both sub-distributions have
the same bell-like shape, with peaks somewhat below the mid-range, but the
odd sub-distribution has a much lower peak (311 vs. 847) and significantly
narrower range (9 -- 47 vs. 0 -- 60).
Another interesting feature is the non-unimodality, i.e., the existence of
multiple local maxima, more strongly pronounced for the odd values.
For other (large enough) numbers of vertices, distributions have the same
general shape and exhibit the same properties with respect to the parity and
multi-modality.

Some other interesting properties of the Mostar index distributions are
revealed in our Table 2. There we tabulated possible values of Mostar
indices, starting from 2, and in each row, for a given $Mo (G)$, in
column $n$ we give the number of graphs on $n$ vertices attaining the 
given value. We have limited the table to the values $ 2 \leq Mo (G) \leq
10$ because they capture all properties observed for the larger values.
The row 1 is omitted since 1 is not realizable, and the row 0 is omitted
since it would contain exactly the numbers in parentheses in the third
column of Table 1. The columns for $n = 1,2$ are also omitted since the
only possible value of Mostar index of graphs on 1 and on 2 vertices is zero.
The missing entries in the shown columns are indicated by --.
\begin{table}[ht]
\centering
$\begin{array}[valign=c]{c|c|c|c|c|c|c|c|c|c }
Mo (G) \backslash n& 3 & 4 & 5 & 6 & 7 & 8 & 9 & 10 & 11   \\[1.5ex]
\hline
2 & 1 & - & 1 & 1 & 2 & - & - & - & 1 \\
3 & - & -& -& -& - & - & 1 & -& 2 \\
4 & - & 3 & 2 & 3 & 12 & 18 & 56 & 34 & 3278 \\
5 & - & - & - & - & - & - & - & - & 10 \\
6 & - & 1 & 3 & 3 & 14 & 41 & 103 & 618 & 4483 \\
7 & - & - & 2 & - & 1 & - & - & - &  26 \\
8 & - & - & 6 & 16 & 31 & 105 & 387 & 2132 & 14348 \\
9 & - & - & - & 1 & 3 & 5 & 15 & 19 & 459 \\
10 & - & - & 3 & 8 & 24 & 113 & 480 & 4715 & 31259
\end{array}$
\caption{Another view of distributions of Mostar indices for small graphs.}\label{tab:r_k(n)}
\end{table}

There are two interesting observations to be noted in Table 2. The first
is that the sequence starting with 3,9,4,11,4,5,5,6,5 does not appear in the
OEIS. This sequence shows the smallest number of vertices of a graph
attaining a given value of $Mo (G)$. The second is that, for any given number
of vertices, there seem to be many more graphs with even than with odd 
values of the Mostar index. It would be interesting to have a plausible
explanation of those facts.

We close this section by listing some other observations resulting from
our search on small graphs. As of now we do not have proofs for any of the 
following claims. Counterexamples to any, or at all, of them are welcome.
\begin{itemize}
\item The Mostar index of regular graphs is even;
\item The graphs with $Mo (G) = 3$ are bi-degreed, with max degree 4 and
min degree 3;
\item The graphs with $Mo (G) = 5$ have the min degree 3 and the max degree 5.
If the max degree is 5, then there is exactly one vertex of degree 5 and
exactly one vertex of degree 3;
\item Each graph with Mostar degree 3 or 5 contains a triangle;
\item Only one graph with $Mo (G) = 3$ is planar.
\end{itemize}

\section{Infinite realizability}

A nonnegative integer $p$ is {\em infinitely realizable} by a graph-theoretical
invariant $I$ if there are infinitely many graphs $G$ such that $I(G) = p$.
(When the invariant is clear from the context, we just speak about infinite
realizability of $p$.)
Since all cycles and complete graphs have Mostar index equal to 0, it is
clear that 0 is infinitely realizable by the Mostar index. We have shown
that 1 is not realizable. What about the integers greater than one? The next
result shows that all even nonnegative integers are infinitely realizable.
\begin{proposition}
For any even nonnegative integer $p$ there are infinitely many graphs $G$ 
such that $Mo (G) = p$.
\end{proposition}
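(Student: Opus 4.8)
The plan is to separate $p=0$ from $p\ge 2$ and, in each case, exhibit an explicit infinite family. For $p=0$ nothing new is needed: every cycle $C_n$ (indeed every vertex‑transitive, hence distance‑balanced, graph) has $Mo=0$, so $\{C_n : n\ge 3\}$ already realizes $0$ infinitely often.

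For even $p\ge 2$ the idea is to start from a family $\{H_t\}$ of highly symmetric graphs of unbounded order with $Mo(H_t)=0$ and to perturb each member by one and the same \emph{local} modification — deleting a single, carefully chosen edge (or a bounded collection of pairwise far‑apart such edges) — so that (i) only a bounded number of shortest‑path distances change, with the bound and the actual changes independent of $t$, and therefore (ii) $Mo$ of the perturbed graph is a constant depending only on the modification, not on $t$; one then arranges this constant to equal $p$. The prototype is the prism family $H_t=C_t\,\square\,K_2$, $t\ge 3$: each $H_t$ is vertex‑transitive, hence distance‑balanced, so $Mo(H_t)=0$. Delete one rung $e$ (an edge of the perfect matching joining the two $t$‑cycles). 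The key point is locality: for every pair of vertices other than the two endpoints of $e$ a shortest path avoiding $e$ survives (reroute through another rung), so the only distance that changes is the one between the two endpoints of $e$, and it jumps from $1$ to $3$. Tracking the effect of this single changed distance on the edge contributions, one sees that exactly the four edges incident with the endpoints of $e$ stop being equieffective, each acquiring contribution $2$; hence $Mo(C_t\,\square\,K_2-e)=8$ for all $t\ge 3$, giving infinitely many graphs with Mostar index $8$.

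To realize an arbitrary even $p$ one runs the same scheme over a base tailored to $p$. If the cycle factor is replaced by an $r$‑regular vertex‑transitive graph $H$ — for each fixed $r\ge 2$ there are infinitely many, e.g.\ $H=C_t\,\square\,Q_{r-2}$ with $t$ arbitrary — then the same locality argument applies to $H\,\square\,K_2$ with one rung deleted and yields $Mo=4r$, while deleting one edge of a triangular fibre of $H\,\square\,C_3$ changes only the distance between its endpoints (from $1$ to $2$, via the third fibre vertex) and yields $Mo=2(r+1)$. Taking $r=p/2-1$ in the second construction realizes every even $p\ge 6$ by an infinite family; and deleting $j$ pairwise far‑apart rungs of one large prism simply adds the contributions, realizing every multiple of $8$ at once. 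The remaining values $p\in\{2,4\}$ are handled by separate small families (one starts from $C_3-e=P_3$ and $C_4-e=P_4$, whose Mostar indices are $2$ and $4$, and produces infinitely many companions, or exhibits another distance‑balanced base with the right local structure).

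The main obstacle — and the step that must be done with care — is the locality claim together with the attendant bookkeeping: one has to prove that the chosen modification disturbs only an explicitly known, bounded set of distances (this is precisely where the abundance of rungs/automorphisms in $H_t$ is used to reroute every other geodesic), and then compute exactly how the finitely many affected edge contributions change. A secondary, more delicate point for a clean write‑up is to make the resulting constant equal to \emph{every} even value, rather than only to the values $8$, $4r$, $2(r+1)$, $8j,\dots$ coming from the most obvious perturbations, which is why a few small $p$ may require tailor‑made constructions.
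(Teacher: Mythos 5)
Your route is genuinely different from the paper's. The paper builds, for each even $p=2m$ and each $k$, a single cyclic ``layered'' graph $G(p,k)$ out of $2k+1$ copies of $K_{m+1}$ and $2k+3$ copies of $K_1$, and shows that exactly two edges are non-equieffective, each contributing $m$; this covers every even $p\ge 0$, including $p=2$ and $p=4$, uniformly. You instead perturb a distance-balanced Cartesian product by deleting one edge and exploit locality of the distance change. That core idea is sound and your bookkeeping checks out: in $C_t\,\square\,K_2-e$ only $d(a,b)$ changes (from $1$ to $3$), the four surviving edges at $a,b$ each pick up contribution $2$, and $Mo=8$; more generally $H\,\square\,K_2$ minus a rung gives $4r$, and $H\,\square\,C_3$ minus a fibre edge gives $2(r+1)$ (there the vertex $b$ moves from ``closer to $a$'' to ``equidistant'', or from ``equidistant'' to ``closer to $y$'', so each affected edge contributes $1$, not $2$ --- you got this right). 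Since for every $r\ge 2$ there are infinitely many connected $r$-regular vertex-transitive graphs, this realizes $p=0$ and every even $p\ge 6$ infinitely often. What each approach buys: yours is conceptually cleaner and immediately generalizes to other local surgeries; the paper's is a single uniform construction with no case analysis.

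The genuine gap is $p\in\{2,4\}$. Your schemes produce only the values $8$, $4r$ $(r\ge 2)$, $2(r+1)$ $(r\ge 2)$ and $8j$, all of which are at least $6$, and the sentence claiming that $C_3-e=P_3$ and $C_4-e=P_4$ ``produce infinitely many companions'' is not a construction --- deleting an edge of a bare cycle destroys locality entirely ($Mo(P_n)=\lfloor (n-1)^2/2\rfloor$ grows with $n$), and within your framework the total contribution created is bounded below by the number of surviving edges at the two endpoints of the deleted edge, which forces small degrees and hence conflicts with the redundancy needed for locality. So $2$ and $4$ require either a tailor-made infinite family (e.g.\ the paper's $G(2,k)$ and $G(4,k)$) or a different local structure (for instance, an infinite family of distance-balanced graphs containing a triangle two of whose vertices have degree exactly $2$ would yield $Mo=2$ after deleting the edge between them --- but such graphs cannot be vertex-transitive, so your source of base graphs dries up). As written, the proof establishes the proposition for all even $p$ except $2$ and $4$, and those two cases must be supplied before the argument is complete.
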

\begin{proof}
Let $p = 2m$ be an even nonnegative integer. We consider the graph $G(p,k)$
constructed from $2k+1$ copies of complete graphs $K_{m+1}$ and $2k+3$
copies of $K_1$, connected in a way shown in Fig. \ref{eveninf}. The claim
will follow if we show that $Mo (G) = p$ for any choice of $k \geq 1$. So,
label the building blocks of $G(p,k)$ cyclically in the way shown in Fig.
\ref{eveninf}. We say that all vertices belonging to the graph labeled by
$j$ are at level $j$.
Our graph $G(p,k)$ has $4k+4$ levels in total, out of which there are $2k+1$
levels with $K_{m+1}$ and $2k+3$ levels consisting of $K_1$. Notice that 
any $K_1$ is connected with all vertices of complete graphs at neighboring
levels. Now, if we divide all $4k+4$ levels into two consecutive halves of
$2k+2$ each, it is clear that each half can have at most $k+1$ levels with
$K_{m+1}$, since there can be no two consecutive such levels. However, the
total number of such levels is $2k+1$, which is only possible if one half
contains $k$ levels consisting of $K_{m+1}$ and $k+2$ levels of $K_1$,
while the other contains $k+1$ levels of $K_{m+1}$ and $k+1$ levels of $K_1$.
\begin{figure}
\begin{center}
\includegraphics[scale=0.35]{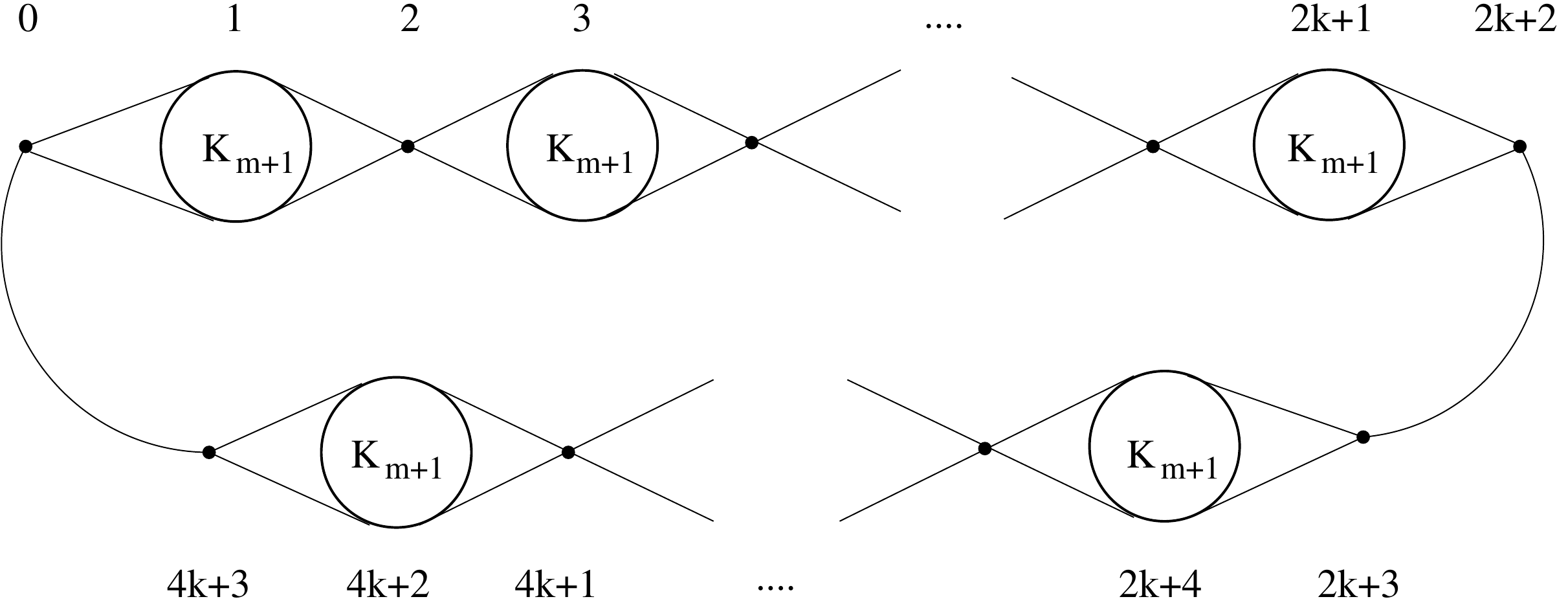}
\end{center}
\caption{Infinite realizability of even integers.}
\label{eveninf}
\end{figure}

Let $x$ be a vertex from level $j$ and let $y$ be a vertex from level $j+1$,
so that level $j$ has 1 vertex, while level $j+1$ has $m+1$ vertices.
Bearing in mind the $j, j-1, j-2, ..., j-(2k-1)$ levels, it is not difficult
to see that there are $k$ levels of $K_{m+1}$ among them. If there were
$k+1$ such levels, this would be possible only if their numbers of vertices
were $1, m+1, 1, m+1, 1, m+1, ..., 1, m+1$, respectively, which is impossible,
since level $j+1$ is also a $K_{m+1}$-level. Hence, this would yield 
$2k+3$ alternating consecutive levels starting and ending with a
$K_{m+1}$-level, which does not exist. Thus, the number of vertices closer
to $x$ than to $y$ equals
$$k \cdot (m+1) + (k+2) \cdot 1 = mk + 2k + 2.$$
On the other hand, the levels $j+1, j+2, j+3, ..., j+(2k+2)$ must consist of
$k+1$ $K_{m+1}$-levels and $k+1$ $K_1$-levels. All of these vertices are
closer to $y$ than to $x$, except for the other ones from the level $j+1$
which are not $y$. Thus, the total number of vertices closer to $y$ than to
$x$ equals
$$(k+1) \cdot (m+1) + (k+1) \cdot 1 - (m+1- 1) = mk + 2k + 2.$$
Thus, $\tr(x) = \tr(y)$, and contributions of all such edges are zero.
It can be analogously shown that the same
conclusion can be made if level $j$ has $m+1$ vertices and level $j+1$ has
1 vertex. So, the only candidates for nonzero contributions are the edge 
between the levels $2k+2$ and $2k+3$, and the edge between the levels
$4k+3$ and $0$. It can be easily shown that both have the contribution of
$m$. Hence the sum of all contributions, $2m = p$, depends only on $m$ and not
on $k$, enabling us to construct an infinite class of graphs with a given
even value of the Mostar index.
\end{proof}
We notice that the above construction works also for the case $p=0$. In that
case, our $G(0,k)$ becomes $C_{4k+4}$, with Mostar index equal to zero for
any $k$.

At the present, we do not have an analogous result for odd integers. Moreover,
we do not know of a single odd nonnegative integer which is infinitely 
realizable. Hence, it makes sense to pose the following questions.

{\bf Problem 1} Are there any odd integers greater than one infinitely
realizable by the Mostar index? Are all of them infinitely realizable? 
If not all, which ones are?

From Table 2 one cannot infer too much about those problems. The row 9, as 
well as the rows 11, 13, and 15 (not shown in Table 2) all exhibit increasing
behavior similar to the even rows, offering a reason to expect that all odd
integers are also infinitely realizable.

\section{Consequences and further developments}

In this section we present some partial results with respect to several
restricted versions of the inverse problem. 

\subsection{Minimal realizability}

Since all integers greater than one are realizable by the Mostar index, the
following problem is well defined.

{\bf Problem 2}  For a given integer $p > 1$, what is the minimum number
$n(p)$ of vertices of a graph $G$ with $Mo(G) = p$? Among all graphs on
$n(p)$ vertices which realize $p$, what is the smallest number of edges
$m(p)$? 

From Lemma \ref{sviosim} it follows that every integer $p \geq 6$ is 
realizable by a graph on $\left \lceil \frac{p+3}{2} \right \rceil$ vertices.
It means that at most $\left \lfloor \frac{p+3}{2} \right \rfloor$ initial
terms in the $p$-th row of Table 2 can be zero. For even integers of the
form $p = (n-1)(n-2)$ the first nonzero entry must appear much earlier,
around $\sqrt{p}$, since $p$ is realizable by the $n$-vertex star $S_n$.
For the integers realizable by split graphs $S_{2n/3,n/3}$, the first
nonzero entry appears around $3 \sqrt[3]{p}$. The exact form of $n(p)$
is still an open problem, pending the solution of the maximum possible
value of $Mo (G)$ for graphs on a given number of vertices. See
\cite{geneson,mikl,mikl1} for some results in that direction.

For finitely realizable integers, if they exist, it makes sense to ask
for the largest graphs realizing them, both in terms of the number of
vertices and of the number of edges.

{\bf Problem 3} If an odd integer $p$ is finitely realizable, what is the 
largest number of vertices $N(p)$ of a graph $G$ realizing $p$? What is the
largest number of edges $M(p)$?

\subsection{Restricted realizability}

By restricted realizability we mean realizability by a particular class
of graphs. For example, which integers greater than one can be realized
by trees, or by unicyclic graphs, or by any other class? Here we 
present the complete answer for trees. 

\begin{proposition} \label{realtrees}
All even nonnegative integers are realizable by trees.
\end{proposition}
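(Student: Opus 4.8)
The plan is to exhibit, for every even nonnegative integer $p$, an explicit tree $T$ with $Mo(T)=p$. By Lemma~\ref{even} every tree has even Mostar index, so no odd values are attainable and only the even ones need to be realized. The natural family to use is exactly the one studied in Proposition~\ref{small}: the starlike trees $T_n(1,k,n-2-k)$, whose Mostar index was computed there to be $\left\lfloor\frac{(n-1)^2}{2}\right\rfloor+2k$. Together with the path, for which $Mo(P_n)=\left\lfloor\frac{(n-1)^2}{2}\right\rfloor$, these give us a large supply of even values, and the main task is to check that as $n$ and $k$ range over the admissible region $1\le k\le\lfloor\frac{n-2}{2}\rfloor$ (plus the pure paths), the resulting set of even numbers has no gaps.

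First I would handle the small cases by hand: $p=0$ is realized by $P_1$ (or $P_2$), $p=2$ by $P_3$, $p=4$ by $P_4$ (indeed $Mo(P_4)=\lfloor 9/2\rfloor=4$), and so on, so that one can start the induction from a convenient base. Then for the inductive step I would fix an even target $p$ and compare it with the path values $a_n:=Mo(P_n)=\left\lfloor\frac{(n-1)^2}{2}\right\rfloor$. The key numerical observation is that consecutive path values $a_n$ and $a_{n+1}$ are close together: $a_{n+1}-a_n = \left\lfloor\frac{n^2}{2}\right\rfloor-\left\lfloor\frac{(n-1)^2}{2}\right\rfloor = n-1$ (up to the usual floor bookkeeping depending on the parity of $n$), while the starlike trees on $n+1$ vertices fill in the even numbers $a_{n+1}+2, a_{n+1}+4,\dots$ up to roughly $a_{n+1}+2\lfloor\frac{n-1}{2}\rfloor \approx a_{n+1}+(n-1)$. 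One then checks that this reach is at least $a_{n+2}$, i.e.\ that the block of even values produced at order $n+1$ overlaps the block produced at order $n+2$, so no even integer is skipped once $n$ is large enough; combined with the finitely many small cases checked directly, this covers all even $p$.

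The step I expect to be the main obstacle is precisely this overlap estimate, because the floor function $\left\lfloor\frac{(n-1)^2}{2}\right\rfloor$ behaves differently according to the parity of $n$, and the admissible range of $k$ is $1\le k\le\lfloor\frac{n-2}{2}\rfloor$, which also depends on parity. So the clean inequality ``largest even value realizable at order $n+1$ $\ge$ smallest even value realizable at order $n+2$'' has to be verified in a couple of parity cases, and one must be slightly careful near the boundary $k=\lfloor\frac{n-2}{2}\rfloor$, where $T_n(1,k,n-2-k)$ degenerates (when $n$ is even the two long branches have equal length). A convenient way to organize this is to note that $Mo(T_{n+1}(1,k,n-1-k))$ ranges over an arithmetic progression with common difference $2$ whose first term is $a_{n+1}+2$ and whose last term is within $2$ of $a_{n+2}$; writing out $a_{n+2}-a_{n+1}$ explicitly as either $n$ or $n-1$ and comparing with $2\lfloor\frac{n-1}{2}\rfloor$ closes the gap. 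Once the overlap is established for all $n$ beyond a small threshold, the proposition follows, since every even integer either equals $a_n$ for some $n$, or lies strictly between $a_n$ and $a_{n+1}$ and is therefore of the form $a_n+2k$ with $1\le k\le\lfloor\frac{n-2}{2}\rfloor$, hence realized by $T_n(1,k,n-2-k)$.
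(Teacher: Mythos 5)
Your proposal is correct and follows essentially the same route as the paper: both use the paths $P_n$ together with the starlike trees $T_n(1,k,n-2-k)$ and the formula $Mo(T_n(1,k,n-2-k))=\bigl\lfloor\frac{(n-1)^2}{2}\bigr\rfloor+2k$ from Proposition~\ref{small}, and then check that the resulting blocks of even values leave no gaps (the paper packages your overlap estimate as the exact identity $\bigl\lfloor\frac{(n-1)^2}{2}\bigr\rfloor+2\bigl\lfloor\frac{n-2}{2}\bigr\rfloor+2=\bigl\lfloor\frac{n^2}{2}\bigr\rfloor$, which shows consecutive blocks tile the even integers exactly). The only substantive difference is that the paper observes in passing that all the trees used are chemical, yielding a slightly stronger statement.
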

\begin{proof}
We will prove a stronger result, that all even nonnegative integers are
actually, realizable by chemical trees, i.e., by trees having the maximum
degree 4. We start by noticing that $p=0$ is realized by $K_2 = P_2$, which
is a chemical tree. Further, since 
$Mo(P_3)=2$, $Mo(P_4)=4$ and $Mo(S_4)=6$, it suffices to consider $p\geq 8$.
We can routinely check that
$\Big\lfloor\frac{(n-1)^2}{2}\Big\rfloor+2\Big\lfloor\frac{n-2}{2}\Big\rfloor+2=\Big\lfloor\frac{n^2}{2}\Big\rfloor$
holds for any odd or even integer $n\geq 5$. Therefore, there are exactly
$\Big\lfloor\frac{n-2}{2}\Big\rfloor+1$ even numbers in the interval
$\Big[\Big\lfloor\frac{(n-1)^2}{2}\Big\rfloor,\Big\lfloor\frac{n^2}{2}\Big\rfloor\Big)$.
By Theorem \ref{small}, we have
$Mo(P_n)=\Big\lfloor\frac{(n-1)^2}{2}\Big\rfloor$ and
$Mo(T_n(1,k,n-2-k))=\Big\lfloor\frac{(n-1)^2}{2}\Big\rfloor+2k$
with $1\le k\le \lfloor\frac{n-2}{2}\rfloor$. Note that any positive integer
$p$ lies in an interval
$\Big[\Big\lfloor\frac{(n-1)^2}{2}\Big\rfloor,\Big\lfloor\frac{n^2}{2}\Big\rfloor\Big)$
for some integer $n$. The result follows immediately.
\end{proof}
By combining the above results with Lemma \ref{even}, we obtain the 
complete solution for realizability by trees.
\begin{theorem}
A nonnegative integer $p$ is the Mostar index of a tree if and only if $p$ is
even.\end{theorem}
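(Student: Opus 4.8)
The plan is to establish the two implications separately. The ``only if'' direction is immediate from Lemma \ref{even}, which tells us that $Mo(T)$ is always even, so any realizable value must be even. Hence the entire content of the theorem lies in the ``if'' direction: every even nonnegative integer $p$ must appear as $Mo(T)$ for some tree $T$. This is precisely the statement of Proposition \ref{realtrees}, so the theorem follows by simply combining the two.

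For the sake of making the argument self-contained, let me outline how I would prove Proposition \ref{realtrees} (which I am allowed to invoke, but whose proof is the real work). The idea is to use the family of starlike trees $T_n(1,k,n-2-k)$ studied in Proposition \ref{small}, which have $Mo$-value $\lfloor (n-1)^2/2 \rfloor + 2k$ for $1 \le k \le \lfloor (n-2)/2 \rfloor$, together with $Mo(P_n) = \lfloor (n-1)^2/2 \rfloor$. First I would dispose of the small cases $p \in \{0,2,4,6\}$ by exhibiting $P_2, P_3, P_4, S_4$ directly. Then, for $p \ge 8$, the key observation is that as $k$ ranges over $0, 1, \ldots, \lfloor (n-2)/2 \rfloor$, the values $\lfloor (n-1)^2/2 \rfloor + 2k$ sweep out \emph{all} even integers in the interval $\bigl[\lfloor (n-1)^2/2 \rfloor,\ \lfloor (n-1)^2/2 \rfloor + 2\lfloor (n-2)/2 \rfloor\bigr]$, and one checks the identity
\[
\Big\lfloor\frac{(n-1)^2}{2}\Big\rfloor + 2\Big\lfloor\frac{n-2}{2}\Big\rfloor + 2 = \Big\lfloor\frac{n^2}{2}\Big\rfloor
\]
for every integer $n \ge 5$ (a short case split on the parity of $n$). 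This identity shows that the interval covered for parameter $n$ ends exactly where the interval for parameter $n+1$ begins, so the union of these intervals over $n \ge 4$ covers every sufficiently large even integer with no gaps. Since every positive integer lies in $\bigl[\lfloor (n-1)^2/2 \rfloor, \lfloor n^2/2 \rfloor\bigr)$ for a unique $n$, every even $p \ge 8$ is attained.

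The main obstacle, such as it is, is purely bookkeeping: verifying that the arithmetic progressions of step $2$ given by the $T_n(1,k,n-2-k)$ family tile the even integers without overlap or gaps, which reduces to the floor identity above plus the observation that consecutive blocks abut. There is no genuine structural difficulty, since Proposition \ref{small} already did the work of computing the Mostar indices of the relevant trees; one only needs to confirm these trees are chemical (maximum degree $4$ — indeed the central vertex has degree exactly $3$) to get the slightly stronger statement that chemical trees suffice. Combining Proposition \ref{realtrees} with Lemma \ref{even} then yields the theorem with no further effort.
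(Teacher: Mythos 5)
Your proposal is correct and follows exactly the paper's route: the theorem is obtained by combining Lemma \ref{even} (only if) with Proposition \ref{realtrees} (if), and your sketch of the latter — small cases $P_2, P_3, P_4, S_4$, then the starlike trees $T_n(1,k,n-2-k)$ together with the floor identity showing consecutive blocks of even values abut — is the same argument the paper gives. No gaps.
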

We are not aware of any studies of realizability of integers as Mostar
indices of other classes of graphs. The most obvious candidates are the uni-
and the bicyclic graphs, and also the other graphs with low cyclomatic number.
We believe that the cactus graphs are also promising.

{\bf Problem 4} Which nonnegative integers can appear as Mostar indices of
graphs with the cyclomatic number equal to $c$ for a given positive integer
$c$?

{\bf Problem 5} Which nonnegative integers are Mostar indices of proper
cacti, i.e., of graphs whose each block is a cycle?

{\bf Problem 6} Which nonnegative integers are Mostar indices of bipartite
graphs?

\subsection{Miscellaneous open problems}

The list of problems in the previous subsections is far from exhaustive.
We have settled some open questions, but many more remain unanswered. Here
are just two of them.

{\bf Problem 7} Why are graphs with even Mostar indices far more common
than the graphs with odd values?

{\bf Problem 8} What can be said about the position of the maximum
of the distribution of $Mo(G)$ for a given number of vertices? In other
words, which values of the Mostar index is the most common among graphs on a
given number of vertices? Which values within the range cannot appear?

Our empirical results suggest that any graph realizing 5 by its Mostar
index has the maximum degree 5, but we have no proof of this fact. It is,
in principle, possible that 5 could be also realized by a chemical graph,
although we deem it unlikely. Hence our last problem.

{\bf Problem 9} Is $p=5$ the only nonnegative integer greater than one
not realizable by chemical graphs?

We close the paper by noting that the inverse problems for the weighted
versions of the Mostar index, for the total Mostar index, and for other
generalizations mentioned in \cite{alidoslic}, are still widely open. 

\section*{Acknowledgments}
I. Damnjanovi\'c is supported by Diffine LLC.
T. Do\v{s}li\'c gratefully acknowledges partial support by Slovenian ARRS
via grant no. J1-3002 and by COST action CA21126 NanoSpace.
This work is supported in part by the Slovenian
Research Agency (research program P1-0294 and research projects N1-0140,
J1-2481). K. Xu is supported by NNSF of China (Grant No. 12271251).

\end{document}